\documentclass[12pt]{amsart}
\newtheorem{theorem}{Theorem}[section]

\newtheorem{proposition}[theorem]{Proposition}
\theoremstyle{definition}
\newtheorem{definition}[theorem]{Definition}

\usepackage{enumerate}
\theoremstyle{remark}

\usepackage[margin=0.75in]{geometry}
\usepackage{calligra}
\numberwithin{equation}{section}
\DeclareMathOperator*{\esssup}{ess\,sup}
\def\H{{\mathbb{H}^n}}
\def\L{{L^2(\mathbb{H}^n)}}
\def\N{{\mathbb{N}}}
\def\h{{\mathcal{H}}}
\def\R{{\mathbb{R}}}
\def\l{{L^2(\mathbb{R}^n)}}
\def\b{{\mathcal{B}_2}}
\def\Z{{\mathbb{Z}^n}}
\def\z{{\mathbb{Z}}}

\def\c{{\mathbb{C}}}


\newcommand\numberthis{\addtocounter{equation}{1}\tag{\theequation}}

\begin{document}
\title{Gabor System based on the unitary dual \\
of the Heisenberg group}

\author{S.R. Das}
\address{Department of Mathematics, Indian Institute of Technology, Madras}
\email{santiranjandas100@gmail.com}

\author{R. Radha}
\address{Department of Mathematics, Indian Institute of Technology, Madras}
\email{radharam@iitm.ac.in}

\subjclass[2010]{Primary 42C15; Secondary 43A30, 47A67}



\keywords{Bessel sequence; Frames; Gabor system; Heisenberg group; Hilbert-Schmidt operator; Orthonormal system; Parseval frame; Riesz basis; Unitary representation; Weyl transform}

\begin{abstract}
In this paper Gabor system of certain type based on the unitary dual of the Heisenberg group $\H$ is introduced and a sufficient condition is obtained for the Gabor system to be a Bessel sequence for $L^2(\R^*,\b;d\kappa)$ using the {\it Schr\"{o}dinger} representation of $\H$, where $\b$ denotes the class of {\it Hilbert-Schmidt} operators on $L^2(\R^n)$ and $d\kappa$ denotes the Haar measure on $\R^*$. Further a necessary and sufficient condition is provided for the Gabor system to be an orthonormal system, a Parseval frame sequence, a frame sequence and a Riesz sequence. 
\end{abstract}
\maketitle
\section{Introduction}
The goal of Gabor analysis is to express a function in terms of time-frequency shifts of a single function, arising from two classes of operators on $L^2(\R)$, namely translation and modulation $T_a,M_b:L^2(\R)\rightarrow L^2(\R)$, defined by $(T_af)(x)=f(x-a)$ and $(M_bf)(x)=e^{2\pi ibx}f(x)$ respectively. If we restrict the translation and modulation parameters to a lattice $\{(mb,na):m,n\in\z\}\subset\R^2$, then the system $\{M_{mb}T_{na}g:m,n\in\z\}$ is called a Gabor system in $L^2(\R)$ generated by $g\in L^2(\R)$. Gabor systems are widely applied in signal processing, image analysis, digital communication, quantum physics and so on. Also, in theoretical point of view, it has been a basic question of interest under what condition on the window function $g\in L^2(\R)$ the Gabor system will be a frame, a Parseval frame, an orthonormal basis and a Riesz basis. There are several interesting and deep results available for Gabor system in $L^2(\R)$. For a detailed study of frames and Gabor system, we refer to the book \cite{olebook}.\\

In the recent years frames and Riesz bases for system of translates have been studied extensively in various group settings such as locally compact abelian group, compact non-abelian group, Heisenberg group and in general certain {\it Lie} groups. (See \cite{kamyabi}, \cite{cabrelli}, \cite{radhas}, \cite{CMO}, \cite{saswata}, \cite{iverson}, \cite{saswatacollec}, \cite{arati} and \cite{saswatahouston}.) The wavelet system has also been studied for a locally compact abelian group and Heisenberg group. (See \cite{dahlke}, \cite{holsch}, \cite{farkov}, \cite{gitta}, \cite{mayelimra}, \cite{araticol} and \cite{aratijmpa}.) However upto our knowledge Gabor system has not been studied on a non-compact non-abelian locally compact group. In this paper we attempt to study Gabor system on the Heisenberg group.\\

For a locally compact abelian group $G$, a Gabor system is defined as the collection $\{M_\nu T_\lambda g:\nu\in\Gamma, \lambda\in\Lambda\}$ where $\Gamma$ and $\Lambda$ are discrete co-compact subgroups of $\widehat{G}$ and $G$ respectively. (See \cite{jakobsen}.) Notice that for $\nu\in\widehat{G},~\lambda\in G$, both $M_\nu$ and $T_\lambda$ are unitary operators on $L^2(G)$. We shall try to look into a similar system on the Heisenberg group $\H$ using the appropriate lattices $L$ and $\Lambda$ in $\H$ and $\R^*$ (considered as the unitary dual of $\H$) respectively. In the case of non-abelian groups, it is natural to go for unitary irreducible representations in place of modulation. Thus for $g\in\L$, we have to look for $\pi_\lambda,~\lambda\in\Lambda\subset \R^*$ and the left translates $L_{(x^\prime,y^\prime,t^\prime)}$ with $(x^\prime,y^\prime,t^\prime)\in L\subset\H$. The operator $L_{(x^\prime,y^\prime,t^\prime)}$ is a unitary operator on $\L$. Further the representations $\pi_\lambda$, $\pi_\lambda (x,y,t),~(x,y,t)\in\H$, can act only on elements from $\l$. Thus $\pi_\lambda(x,y,t)L_{(x^\prime,y^\prime,t^\prime)}g$ is not meaningful.\\

There is another approach of defining Gabor system on the locally compact abelian group $G$. Christensen and Goh in \cite{ole} define such a system and call it Gabor-type system. It is of the form $\{M_\lambda T_k \Phi:\lambda\in\Lambda,k\in\Gamma\}$, for lattices $\Lambda$, $\Gamma$ in $G$ and $\widehat{G}$ respectively for a function $\Phi\in L^2(\widehat{G})$. Based on this idea we define the Gabor system on the Heisenberg group using its unitary dual $\R^*$. Recall that for a locally compact group $G$, a lattice $\Lambda$ in $G$ is defined to be a discrete subgroup of $G$ which is co-compact. The standard lattice in $\H$ is taken to be $\Lambda=\{(2k,l,m):k,l\in\Z,m\in\z\}$. A standard lattice in $\R^*$ is taken to be $L=\{e^{bp}:p\in\z\}$ for $b>0$. Based on this we define the Gabor system on the Heisenberg group to be the system $\{T_{e^{bp}}M_{a(2k,l,m)}\mathcal{G}:k,l\in\Z,m,p\in\z\}$ generated by $\mathcal{G}$ on the unitary dual $\R^*$, $a,b>0$. More precisely we take $\mathcal{G}\in L^2(\R^*,\b;d\kappa)$, where $\b$ denotes the {\it Hilbert} space of {\it Hilbert-Schmidt} operators on $L^2(\R^n)$ and $d\kappa$ denotes the {\it Haar} measure on $\R^*$.\\

We organize our paper as follows. In section \ref{x}, we give necessary notation and background. In section \ref{y}, we discuss the properties of the Fourier transform of $\b$ valued functions on the locally compact abelian group $\R^*$.  In section \ref{b}, our main aim is to obtain a sufficient condition for the Gabor system to be a Bessel sequence for $L^2(\R^*,\b;d\kappa)$ using the {\it Schr\"{o}dinger} representation of $\H$. In section \ref{z}, we provide a necessary and sufficient condition for the Gabor system to be an orthonormal system, a Parseval frame sequence, a frame sequence and a Riesz sequence.

\section{Preliminaries}\label{x}
Let $\h\neq\{0\}$ be a separable Hilbert space.
\begin{definition}
A sequence $\{f_k:~k\in\N\}$ of elements in $\h$ is said to be a frame for $\h$ if there exist constants $A,B>0$ such that
\begin{align}\label{32}
A\|f\|^2\leq\sum_{k\in\N}\mid\langle f,f_k\rangle\mid^2\hspace{1 mm}\leq B\|f\|^2,\hspace{.75 cm}\forall\ f\in\h.
\end{align}
\end{definition}
The numbers $A$ and $B$ are called lower and upper frame bounds respectively. If only the right hand side of \eqref{32} is satisfied then $\{f_k:~k\in\N\}$ is called a Bessel sequence. In particular if $A=B$ holds in \eqref{32} then $\{f_k:~k\in\N\}$ is called a tight frame. If \eqref{32} holds with $A=B=1$ then $\{f_k:~k\in\N\}$ is called a Parseval frame. If $\{f_k:~k\in\N\}$ is a frame for $\overline{span}\{f_k:~k\in\N\}$, then it is called a frame sequence.\\

Let $\{f_k:~k\in\N\}$ be a frame for $\h$. The operator $\mathbf{T}:\ell^2(\N)\rightarrow\h$ defined by 
\[\mathbf{T}(\{c_k\}_{k=1}^\infty)=\sum_{k=1}^\infty c_kf_k\hspace{1 mm},\hspace{.75 cm}\forall\hspace{1 mm}\{c_k\}_{k=1}^\infty\in \ell^2(\N),\]
is called the synthesis operator. The adjoint of $\mathbf{T}$ denoted by $\mathbf{T}^*:\h\rightarrow \ell^2(\N)$ can be written as 
\[\mathbf{T}^*f=\{\langle f,f_k\rangle\}_{k=1}^\infty,\hspace{1 mm}\forall\hspace{1 mm} f\in\h,\]  
called the analysis operator. The frame operator is defined to be $\mathbf{S}=\mathbf{T}\mathbf{T}^*$ which can be explicitly written as 
\[\mathbf{S}f=\sum_{k=1}^\infty\langle f,f_k\rangle f_k,\hspace{1 mm}\forall\hspace{1 mm} f\in\h.\]
It turns out that $\mathbf{S}$ is a bounded, self-adjoint, positive and invertible operator on $\h$.
\begin{definition}
Suppose $\{f_k:~k\in\N\}$ is a frame for $\h$. The frame $\{\mathbf{S}^{-1}f_k:k\in\N\}$ is called the canonical dual frame of $\{f_k:~k\in\N\}$, where $\mathbf{S}$ is the corresponding frame operator. 
\end{definition}
\begin{definition}
A sequence of the form $\{Ue_k:~k\in\N\}$, where $\{e_k:~k\in\N\}$ is an orthonormal basis of $\h$ and $U$ is a bounded invertible operator on $\h$, is called a Riesz basis. If $\{f_k:~k\in\N\}$ is a Riesz basis for $\overline{span}\{f_k:~k\in\N\}$, then it is called a Riesz sequence. 
\end{definition}
For a study on frames and Riesz bases on $\mathcal{H}$ we refer to \cite{olebook}.\\

Let $\H =\R^n\times\R^n\times\R$ with the group operation defined by 
\[(x,y,t)(u,v,s)=(x+u,y+v,t+s+\frac{1}{2}(u\cdot y-v\cdot x)).\]
Then $\H$ is called the Heisenberg group. The group $\H$ is an example of a non-abelian locally compact non compact $Lie ~group$. The $Haar$ measure on $\H$ is the usual product measure of $Lebesgue$ measures on $\R^{2n}$ and $\R$. The classical Stone-von Neumann theorem states that every infinite dimensional irreducible unitary representation of $\H$ is unitarily equivalent to the {\it Schr\"{o}dinger} representation $\pi_\lambda$, $\lambda\in\R^*$, given by
\begin{equation*}
\pi_\lambda (x,y,t)\phi (\xi)=e^{2\pi i\lambda t}e^{2\pi i\lambda (x\cdot\xi +\frac{1}{2} x\cdot y)}\phi(\xi +y),~~\phi\in\l.
\end{equation*} 
Therefore the unitary dual of $\H$ is $\R^*$. Recall that for a locally compact group $G$, a lattice $\Gamma$ in $G$ is defined to be a discrete subgroup of $G$ which is co-compact. The standard lattice in $\H$ is taken to be $\Gamma=\{(2k,l,m):k,l\in\Z,~m\in\z\}$.\\

The Weyl transform $W:L^2(\R^{2n})\rightarrow\b(L^2(\R^n))$ is defined by
\begin{align*}
W(g)\phi(\xi)=\int_{\R^{2n}}g(x,y)\pi(x,y)\phi(\xi)\ dxdy\ ,
\end{align*}
where $\b(L^2(\R^n))$ is the {\it Hilbert} space of all {\it Hilbert-Schmidt} operators on $L^2(\R^n)$ with the inner product $(T,S)=tr(TS^*)$. Here by $\pi(x,y)$ we mean $\pi_1(x,y,0)$. One can show that the Weyl transform is an isometric isomorphism of $L^2(\R^{2n})$ onto $\b(L^2(\R^n))$. The inversion formula for the Weyl transform is as follows:
\begin{align*}
g(u,v)=tr(\pi(u,v)^*W(g)).
\end{align*}
As in the classical case, the inversion formula is initially valid for the functions belonging to the \emph{Schwartz} space $\mathcal{S}(\R^{2n})$ and then extended to the whole of $L^2(\R^{2n})$. For the study of analysis on the Heisenberg group and the Weyl transform we refer to \cite{follandphase} and \cite{thangavelu}.
\section{Fourier Transform on $L^2(\R^*,\b;d\kappa)$}\label{y}
The Fourier analysis on a locally compact abelian group is well known. (See \cite{rudin} and \cite{follandabs}.) However in this section we first define the characters, Fourier transform and recollect its properties explicitly on the locally compact abelian group $\R^*$ before defining the Fourier transform on $L^2(\R^*,\b;d\kappa)$, where $d\kappa(\lambda)=\frac{d\lambda}{|\lambda|}$ is the Haar measure on the group $\R^*$. These are required for studying Gabor system on the Heisenberg group.  
Let $\sigma:\R^*\rightarrow\R_+^*\times\{1,-1\}$ be defined by $\sigma(\lambda)=\big(|\lambda|,\frac{\lambda}{|\lambda|}\big)$. Then $\sigma$ is an isomorphism.
It is easy to see that $\sigma$ is also a continuous map with continuous inverse. Therefore $\sigma$ is a group isomorphism and homeomorphism between $\R^*$ and $\R_+^*\times\{1,-1\}$. The collection of all distinct characters of $\R_+^*$ is $\{\chi_\nu:\nu\in\R_+^*\}$, where $\chi_\nu(\lambda)=e^{2\pi i\log\nu\log\lambda},\ \lambda\in\R_+^*$, and for the group $\{1,-1\}$ is $\{\varsigma_1,\varsigma_{-1}\}$, where $\varsigma_1(\pm 1)=1$ and $\varsigma_{-1}(\pm 1)=\pm 1$. Therefore the collection of all characters of $\R_+^*\times\{1,-1\}$ is $\{\chi_{\nu,j}:\nu\in\R_+^*,j\in\{\pm 1\}\}$, where $\chi_{\nu,j}(\lambda,\pm 1)=\chi_\nu(\lambda)\varsigma_j(\pm 1)$.\\

Let $f,g\in L^1(\R^*)$. Then for $(\nu,j)\in\R_+^*\times\{1,-1\}$, the group Fourier transform of $f$ is
\[\widehat{f}(\nu,j)=\int_{\R^*}f(\lambda)\ \overline{(\nu,j)(\lambda)}\ \frac{d\lambda}{|\lambda|}\ ,\]
where $(\nu,j)(\lambda)=\chi_{\nu,j}(|\lambda|,\frac{\lambda}{|\lambda|})$. We define the convolution of $f,g$ by
\begin{align*}
(f*g)(\lambda)=\int_{\R^*}f\bigg(\frac{\lambda}{\omega}\bigg)g(\omega)\ \frac{d\omega}{|\omega|}.
\end{align*}
For $(\nu,j)\in\R_+^*\times\{1,-1\}$, we have
\begin{align*}
\widehat{f*g}(\nu,j)&=\int_{\R^*}(f*g)(\lambda)\ \overline{(\nu,j)(\lambda)}\ \frac{d\lambda}{|\lambda|}\\
&=\int_{\R^*}\int_{\R^*}f\bigg(\frac{\lambda}{\omega}\bigg)g(\omega)\ \overline{(\nu,j)(\lambda)}\ \frac{d\omega}{|\omega|}\frac{d\lambda}{|\lambda|}\\
&=\int_{\R^*}\Bigg(\int_{\R^*}f\bigg(\frac{\lambda}{\omega}\bigg)\ \overline{(\nu,j)(\lambda)}\ \frac{d\lambda}{|\lambda|}\Bigg)g(\omega)\ \frac{d\omega}{|\omega|}\\
&=\int_{\R^*}\Bigg(\int_{\R^*}f(\lambda)\ \overline{(\nu,j)(\lambda\omega)}\ \frac{d\lambda}{|\lambda|}\Bigg)g(\omega)\ \frac{d\omega}{|\omega|}\ ,
\end{align*}
by applying Fubini's theorem and change of variables. Since characters are homomorphisms, we have
\begin{align*}
\widehat{f*g}(\nu,j)&=\int_{\R^*}\Bigg(\int_{\R^*}f(\lambda)\ \overline{(\nu,j)(\lambda)}\ \frac{d\lambda}{|\lambda|}\Bigg)g(\omega)\ \overline{(\nu,j)(\omega)}\ \frac{d\omega}{|\omega|}\\
&=\widehat{f}(\nu,j)\ \widehat{g}(\nu,j).\numberthis\label{11}
\end{align*}
For $\omega\in\R^*$ we define the translation operator $T_\omega:L^1(\R^*)\longrightarrow L^1(\R^*)$ by $T_\omega h(\lambda)=h\big(\frac{\lambda}{\omega}\big),\ \forall\ h\in L^1(\R^*),\lambda\in\R^*$. Now
\begin{align*}
\widehat{T_\omega f}(\nu,j)&=\int_{\R^*}T_\omega f(\lambda)\ \overline{(\nu,j)(\lambda)}\ \frac{d\lambda}{|\lambda|}\\
&=\int_{\R^*}f\bigg(\frac{\lambda}{\omega}\bigg)\ \overline{(\nu,j)(\lambda)}\ \frac{d\lambda}{|\lambda|}\\
&=\int_{\R^*}f(\lambda)\ \overline{(\nu,j)(\lambda\omega)}\ \frac{d\lambda}{|\lambda|}\\
&=\overline{(\nu,j)(\omega)}\int_{\R^*}f(\lambda)\ \overline{(\nu,j)(\lambda)}\ \frac{d\lambda}{|\lambda|}\\
&=\overline{(\nu,j)(\omega)}\ \widehat{f}(\nu,j)\ ,
\end{align*}
by applying change of variables and homomorphism properties of the characters. The inversion formula is as follows:
\begin{align*}
f(\lambda)&=\int_{\R_+^*\times\{1,-1\}}\widehat{f}(\nu,j)\ (\nu,j)(\lambda)\ \frac{d\nu}{\nu}dj\ ,
\end{align*}
where $dj$ is the counting measure on $\{1,-1\}$. Notice that, as in the classical case, the inversion formula is valid in a pointwise sense initially for an appropriate dense subspace and then later extended to the whole of $L^2(\R^*)$. In other words the Fourier inversion formula can be written more explicitly as
\begin{align*}
f(\lambda)&=\sum_{j\in\{1,-1\}}\int_{\R_+^*}\widehat{f}(\nu,j)\ (\nu,j)(\lambda)\ \frac{d\nu}{\nu}.
\end{align*}
Let $f\in L^1\cap L^2(\R^*)$. Define $\widetilde{f}:\R^*\longrightarrow\c$ by $\widetilde{f}(\lambda)=\overline{f(\frac{1}{\lambda})},\ \lambda\in\R^*$. Let $g=\widetilde{f}*f$. Then $g\in L^1(\R^*)$ and from \eqref{11} we have $\widehat{g}(\nu,j)=\widehat{\widetilde{f}}(\nu,j)\widehat{f}(\nu,j),\ \forall\ (\nu,j)\in\R_+^*\times\{1,-1\}$. Now
\begin{align*}
\widehat{\widetilde{f}}(\nu,j)&=\int_{\R^*}\widetilde{f}(\lambda)\ \overline{(\nu,j)(\lambda)}\ \frac{d\lambda}{|\lambda|}\\
&=\int_{\R^*}\overline{f\bigg(\frac{1}{\lambda}\bigg)}\ \overline{(\nu,j)(\lambda)}\ \frac{d\lambda}{|\lambda|}\\
&=\int_{\R^*}\overline{f(\lambda)}\ \overline{(\nu,j)\bigg(\frac{1}{\lambda}\bigg)}\ \frac{d\lambda}{|\lambda|}\ ,\numberthis\label{12}
\end{align*}
by applying change of variables. But
\begin{align*}
(\nu,j)\bigg(\frac{1}{\lambda}\bigg)&=\chi_{\nu,j}\bigg(\frac{1}{|\lambda|},\frac{|\lambda|}{\lambda}\bigg)=\chi_\nu\bigg(\frac{1}{|\lambda|}\bigg)\varsigma_j\bigg(\frac{|\lambda|}{\lambda}\bigg)=e^{2\pi i\log\nu\log(1/|\lambda|)}\varsigma_j\bigg(\frac{\lambda}{|\lambda|}\bigg)=e^{-2\pi i\log\nu\log|\lambda|}\varsigma_j\bigg(\frac{\lambda}{|\lambda|}\bigg)\\
&=\overline{\chi_\nu(|\lambda|)\varsigma_j\bigg(\frac{\lambda}{|\lambda|}\bigg)}=\overline{\chi_{\nu,j}\bigg(|\lambda|,\frac{\lambda}{|\lambda|}\bigg)}=\overline{(\nu,j)(\lambda)}.\numberthis\label{18}
\end{align*}
Hence \eqref{12} gives
\begin{align*}
\widehat{\widetilde{f}}(\nu,j)&=\int_{\R^*}\overline{f(\lambda)}\ (\nu,j)(\lambda)\ \frac{d\lambda}{|\lambda|}=\overline{\widehat{f}(\nu,j)}.
\end{align*}
Thus $\widehat{g}(\nu,j)=|\widehat{f}(\nu,j)|^2$. Now
\begin{align*}
\|f\|_{L^2(\R^*)}^2&=\int_{\R^*}|f(\lambda)|^2\ \frac{d\lambda}{|\lambda|}=\int_{\R^*}\widetilde{f}\bigg(\frac{1}{\lambda}\bigg)f(\lambda)\ \frac{d\lambda}{|\lambda|}=(\widetilde{f}*f)(1)=g(1).
\end{align*}
Hence by inversion formula and using the fact that $(\nu,j)(1)=1$, we get
\begin{align*}
\|f\|_{L^2(\R^*)}^2&=\sum_{j\in\{1,-1\}}\int_{\R_+^*}\widehat{g}(\nu,j)\ (\nu,j)(1)\ \frac{d\nu}{\nu}=\sum_{j\in\{1,-1\}}\int_{\R_+^*}|\widehat{f}(\nu,j)|^2\ \frac{d\nu}{\nu}\\
&=\int_{\R_+^*\times\{1,-1\}}|\widehat{f}(\nu,j)|^2\ \frac{d\nu}{\nu}dj=\|\widehat{f}\|_{L^2(\R_+^*\times\{1,-1\})}^2\\
&=\|\widehat{f}\|_{L^2(\R^*)}^2\ ,
\end{align*}
proving Plancherel formula.\\

Now we shall define the Fourier transform on $L^2(\R^*,\b;d\kappa)$, where $\b$ is the set of all Hilbert-Schmidt operators on $L^2(\R^n)$. Let $f,g\in L^1(\R^*,\b;d\kappa)$. Then for $(\nu,j)\in\R_+^*\times\{1,-1\}$,
\begin{align*}
\widehat{f}(\nu,j)=\int_{\R^*}\overline{(\nu,j)(\lambda)}\ f(\lambda)\ \frac{d\lambda}{|\lambda|}\ ,
\end{align*}
where the integral is a Bochner integral. The convolution of $f$ and $g$ is defined by
\begin{align*}
(f*g)(\lambda)=\int_{\R^*}f\bigg(\frac{\lambda}{\omega}\bigg)g(\omega)\ \frac{d\omega}{|\omega|}\ ,
\end{align*}
where this integral is also a Bochner integral. Here the product $f\big(\frac{\lambda}{\omega}\big)g(\omega)$ is the composition of two Hilbert-Schmidt operators. Since $\b$ is not commutative, $f*g\neq g*f$ in general. For $\omega\in\R^*$ the translation operator $T_{\omega}:L^1(\R^*,\b;d\kappa)\longrightarrow L^1(\R^*,\b;d\kappa)$ is defined as $T_{\omega}h(\lambda)=h\big(\frac{\lambda}{\omega}\big)$. As in the case of Fourier transform on $\R^*$, we can show that $\widehat{f*g}(\nu,j)=\widehat{f}(\nu,j)\widehat{g}(\nu,j)$ and $\widehat{T_\omega f}(\nu,j)=\overline{(\nu,j)(\omega)}\ \widehat{f}(\nu,j)$, $\forall\ (\nu,j)\in\R_+^*\times\{1,-1\}$. The inversion formula is as follows:
\begin{align*}
f(\lambda)=\sum_{j\in\{1,-1\}}\int_{\R_+^*}(\nu,j)(\lambda)\ \widehat{f}(\nu,j)\ \frac{d\nu}{\nu}.
\end{align*} 

Let $f\in L^1\cap L^2(\R^*,\b;d\kappa)$. Define $\widetilde{f}:\R^*\longrightarrow\c$ by $\widetilde{f}(\lambda)=f\big(\frac{1}{\lambda}\big)^*$, where $f\big(\frac{1}{\lambda}\big)^*$ denotes the adjoint of the Hilbert-Schmidt operator $f\big(\frac{1}{\lambda}\big)$. Define $g=\widetilde{f}*f$. Now
\begin{align*}
\widehat{\widetilde{f}}(\nu,j)&=\int_{\R^*}\widetilde{f}(\lambda)\ \overline{(\nu,j)(\lambda)}\ \frac{d\lambda}{|\lambda|}=\int_{\R^*}f\bigg(\frac{1}{\lambda}\bigg)^*\ \overline{(\nu,j)(\lambda)}\ \frac{d\lambda}{|\lambda|}=\int_{\R^*}f(\lambda)^*\ \overline{(\nu,j)\bigg(\frac{1}{\lambda}\bigg)}\ \frac{d\lambda}{|\lambda|}\\
&=\int_{\R^*}f(\lambda)^*\ (\nu,j)(\lambda)\ \frac{d\lambda}{|\lambda|}=\Bigg(\int_{\R^*}f(\lambda)\ \overline{(\nu,j)(\lambda)}\ \frac{d\lambda}{|\lambda|}\Bigg)^*\\
&=\big(\widehat{f}(\nu,j)\big)^*\ ,
\end{align*}
by applying change of variables and using \eqref{18}. For an orthonormal basis $\{\phi_s:s\in\z\}$ of $L^2(\R^n)$, we have
\begin{align*}
\|f\|_{L^2(\R^*,\b;d\kappa)}^2&=\int_{\R^*}\|f(\lambda)\|_\b^2\ \frac{d\lambda}{|\lambda|}=\int_{\R^*}tr\big(f(\lambda)^*f(\lambda)\big)\ \frac{d\lambda}{|\lambda|}\\
&=\int_{\R^*}\sum_{s\in\z}\langle\phi_s,f(\lambda)^*f(\lambda)\phi_s\rangle\ \frac{d\lambda}{|\lambda|}\\
&=\sum_{s\in\z}\Bigg\langle\phi_s,\Bigg(\int_{\R^*}\widetilde{f}\bigg(\frac{1}{\lambda}\bigg)^*f(\lambda)\ \frac{d\lambda}{|\lambda|}\Bigg)\phi_s\Bigg\rangle\\
&=\sum_{s\in\z}\big\langle\phi_s,(\widetilde{f}*f)(1)\phi_s\big\rangle=tr\big(g(1)\big).
\end{align*}
Hence by inversion formula, we get
\begin{align*}
\|f\|_{L^2(\R^*,\b;d\kappa)}^2&=tr\Bigg(\sum_{j\in\{1,-1\}}\int_{\R_+^*}\widehat{g}(\nu,j)\ \frac{d\nu}{\nu}\Bigg)\\
&=\sum_{s\in\z}\Bigg\langle\phi_s,\Bigg(\sum_{j\in\{1,-1\}}\int_{\R_+^*}\widehat{g}(\nu,j)\ \frac{d\nu}{\nu}\Bigg)\phi_s\Bigg\rangle\\
&=\sum_{j\in\{1,-1\}}\int_{\R_+^*}\sum_{s\in\z}\big\langle\phi_s,\widehat{g}(\nu,j)\phi_s\big\rangle\ \frac{d\nu}{\nu}\\
&=\sum_{j\in\{1,-1\}}\int_{\R_+^*}tr\big(\widehat{g}(\nu,j)\big)\ \frac{d\nu}{\nu}\\
&=\sum_{j\in\{1,-1\}}\int_{\R_+^*}tr\bigg(\big(\widehat{f}(\nu,j)\big)^*\widehat{f}(\nu,j)\bigg)\ \frac{d\nu}{\nu}\\
&=\sum_{j\in\{1,-1\}}\int_{\R_+^*}\|\widehat{f}(\nu,j)\|_\b^2\ \frac{d\nu}{\nu}\\
&=\|\widehat{f}\|_{L^2(\R^*,\b;d\kappa)}^2\ ,
\end{align*}
proving Plancherel formula. By polarization identity we can show that
\begin{align}\label{25}
\big\langle f,g\big\rangle_{L^2(\R^*,\b;d\kappa)}=\big\langle \widehat{f},\widehat{g}\big\rangle_{L^2(\R^*,\b;d\kappa)}.
\end{align}      
\section{A Sufficient Condition for the Gabor System to be a Bessel Sequence}\label{b}
For $(x,y,t)\in\H$, we define $M_{(x,y,t)}:L^2(\R^*,\b;d\kappa)\longrightarrow L^2(\R^*,\b;d\kappa)$ by $(M_{(x,y,t)}F)(\lambda)=\pi_\lambda (x,y,t)F(\lambda)$, $F\in L^2(\R^*,\b;d\kappa),~\lambda\in\R^*$. Now
\begin{align*}
\|M_{(x,y,t)}F\|_{L^2(\R^*,\b;d\kappa)}^2&=\int_{\R^*}\|\pi_\lambda (x,y,t)F(\lambda)\|_\b^2~d\kappa\\
&=\int_{\R^*}\langle\pi_\lambda (x,y,t)F(\lambda),~\pi_\lambda (x,y,t)F(\lambda)\rangle_\b~d\kappa\\
&=\int_{\R^*}tr(F(\lambda)^*\pi_\lambda(x,y,t)^*\pi_\lambda(x,y,t)F(\lambda))~d\kappa\\
&=\int_{\R^*}tr(F(\lambda)^*F(\lambda))~d\kappa\\
&=\int_{\R^*}\|F(\lambda)\|_\b^2~d\kappa\\
&=\|F\|_{L^2(\R^*,\b;d\kappa)}^2.
\end{align*}    
In other words $M_{(x,y,t)}$ is an isometric isomorphism on $L^2(\R^*,\b;d\kappa)$. For $\lambda\in\R^*$, we define the translation operator $T_\lambda:L^2(\R^*,\b;d\kappa)\longrightarrow L^2(\R^*,\b;d\kappa)$ by $(T_\lambda F)(\lambda^\prime)=F(\frac{\lambda^\prime}{\lambda})$, $F\in L^2(\R^*,\b;d\kappa)$, $\lambda^\prime\in\R^*$. Using the fact that the Haar measure $d\kappa$ on $\R^*$ is invariant under translation, we can show that $T_\lambda$ is also an isometric isomorphism on $L^2(\R^*,\b;d\kappa)$.\\

Let $\mathcal{G}\in L^2(\R^*,\b;d\kappa)$ and $a,b>0$. We define the Gabor system generated by $\mathcal{G}$ as $\mathcal{U}(\mathcal{G})=\{T_{e^{bp}}M_{a(2k,l,m)}\mathcal{G}:k,l\in\Z,m,p\in\z\}$.
\begin{theorem}
Let $\mathcal{G}\in L^2(\R^*,\b;d\kappa)$ and $a,b>0$. Define $H_{k,l,m}(z,\lambda):=e^{2\pi ia\lambda(m-k\cdot y)}tr\big(\pi(2a\lambda k-x,al-y,0)\mathcal{G}(\lambda)\big)$, $\lambda\in\R^*,z=(x,y)\in\c^n,~k,l\in\Z,m\in\z$. Let $g_{k,l,m}(z,\nu,j)=e^{\pi ial\cdot x}\mathcal{F}_2H_{k,l,m}(z,\cdot)$ $(\nu,j)$, where $\mathcal{F}_2$ denotes the group Fourier transform on $\R^*$ in the second variable and $j\in\{1,-1\}$. Let
\begin{equation*}
\alpha(z,z^\prime)=\sum_{j,j^\prime\in\{1,-1\}}\esssup_{\nu\in\R_+^*}\sum_{\substack{k,l\in\Z,m\in\z \\ s\in\z\setminus \{0\}}}|g_{k,l,m}(z,\nu,j)|~|g_{k,l,m}(z^\prime,\nu e^{s/b},j^\prime)|~,
\end{equation*}
$z,z^\prime\in\c^n$. Suppose there exist $B_1,B_2$ such that
\begin{enumerate}[(i)]
\item $g_{k,l,m}$ satisfies 
\begin{equation*}
\esssup_{\nu\in\R_+^*}\sum_{k,l\in\Z,m\in\z}\|g_{k,l,m}(\cdot,\nu,j)\|_{L^2(\c^n)}^2\leq B_1,~~~j\in\{1,-1\}
\end{equation*}
and
\item $\alpha$ satisfies
\begin{equation*}
\esssup_{z\in\c^n}\|\alpha(z,\cdot)\|_{L^1(\c^n)}\leq B_2.
\end{equation*} 
\end{enumerate} 
Then the Gabor system $\mathcal{U}(\mathcal{G})$ is a Bessel sequence for $L^2(\R^*,\b;d\kappa)$.
\end{theorem}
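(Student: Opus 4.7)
The plan is to compute each inner product $\langle F, T_{e^{bp}}M_{a(2k,l,m)}\mathcal{G}\rangle$ in a form where the sum over $p\in\z$ is a Parseval-type sum over samples of a Fourier transform on $\R^*$, and then bound the resulting expression using (i) and (ii) in a diagonal-plus-off-diagonal decomposition.

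First, I would write $F(\lambda)=W(f_\lambda)$, $\mathcal{G}(\lambda)=W(g_\lambda)$ via the Weyl transform (so $\|F(\lambda)\|_{\b}=\|f_\lambda\|_{L^2(\R^{2n})}$), and combine the Weyl inversion $g_\lambda(u,v)=tr(\pi(u,v)^*\mathcal{G}(\lambda))$ with $\pi(u,v)^*=\pi(-u,-v)$ and the mixed commutation
\[\pi_\mu(x,y,t)\pi(u,v)=e^{2\pi i\mu t}e^{\pi i(u\cdot y-\mu x\cdot v)}\pi(\mu x+u,y+v)\]
to carry out a phase calculation; after the change of variable $\lambda\mapsto\lambda e^{bp}$ in the Haar integral, this would yield
\[\langle F,T_{e^{bp}}M_{a(2k,l,m)}\mathcal{G}\rangle=\int_{\R^{2n}}e^{-\pi ial\cdot x}\int_{\R^*}f_\lambda(z)\overline{H_{k,l,m}(z,\lambda e^{-bp})}\,\frac{d\lambda}{|\lambda|}\,dz,\]
with $z=(x,y)$. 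Setting $\tilde f_z(\lambda):=f_\lambda(z)$, applying the scalar Plancherel on $\R^*$ from Section \ref{y} to the inner $\lambda$-integral, and absorbing $e^{-\pi ial\cdot x}$ into $\overline{\widehat{H_{k,l,m}(z,\cdot)}(\nu,j)}$ to form $\overline{g_{k,l,m}(z,\nu,j)}$, I would then obtain
\[\langle F,T_{e^{bp}}M_{a(2k,l,m)}\mathcal{G}\rangle=\sum_{j\in\{\pm 1\}}\int_{\R_+^*}(\nu,j)(e^{bp})\,c_{k,l,m}(\nu,j)\,\frac{d\nu}{\nu},\]
where $c_{k,l,m}(\nu,j):=\int_{\R^{2n}}\widehat{\tilde f_z}(\nu,j)\overline{g_{k,l,m}(z,\nu,j)}\,dz$. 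Since $(\nu,j)(e^{bp})=e^{2\pi ibp\log\nu}$ is $j$-independent, this is the ordinary Fourier transform at $bp$ of $u\mapsto C_{k,l,m}(e^u):=\sum_{j}c_{k,l,m}(e^u,j)$, and Parseval's formula for the lattice $b\z\subset\R$ applied to the sum over $p$ gives
\[\sum_{p\in\z}\bigl|\langle F,T_{e^{bp}}M_{a(2k,l,m)}\mathcal{G}\rangle\bigr|^2=\frac{1}{b}\sum_{r\in\z}\int_{\R_+^*}C_{k,l,m}(\nu e^{r/b})\overline{C_{k,l,m}(\nu)}\,\frac{d\nu}{\nu}.\]

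Next I would sum over $k,l\in\Z$, $m\in\z$ and separate the contributions $r=0$ and $r\neq 0$. For $r=0$: successive Cauchy--Schwarz applied first to $\sum_{k,l,m}|g_{k,l,m}(z,\nu,j)||g_{k,l,m}(z',\nu,j')|$ and then to the $dz,dz'$ integrals, combined with hypothesis (i) dominating $\sum_{k,l,m}\|g_{k,l,m}(\cdot,\nu,j)\|_{L^2(\c^n)}^2$ by $B_1$ a.e., and finally Plancherel on $\R^*$ together with the Weyl isometry, yields $\tfrac{2B_1}{b}\|F\|_{L^2(\R^*,\b;d\kappa)}^2$. For $r\neq 0$: applying $|ab|\leq\tfrac12(|a|^2+|b|^2)$ to $|\widehat{\tilde f_z}(\nu,j)|\,|\widehat{\tilde f_{z'}}(\nu e^{-r/b},j')|$ splits the expression into two pieces that become symmetric after the change of variable $\mu=\nu e^{-r/b}$; using the symmetry $\alpha(z,z')=\alpha(z',z)$ together with the bound
\[\sum_{j'}\esssup_{\nu\in\R_+^*}\sum_{r\neq 0,\,k,l,m}|g_{k,l,m}(z,\nu,j)||g_{k,l,m}(z',\nu e^{-r/b},j')|\leq\alpha(z,z'),\]
condition (ii) then controls each piece by $\tfrac{B_2}{2b}\|F\|^2$, summing to $\tfrac{B_2}{b}\|F\|^2$. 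Combining the two cases delivers the Bessel estimate with constant $\tfrac{2B_1+B_2}{b}$.

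The principal difficulty is the first (phase) step: obtaining exactly $H_{k,l,m}$ with the residual factor $e^{-\pi ial\cdot x}$ rests on a careful cancellation of the several phases generated by composing $\pi_\mu$ with $\pi=\pi_1$, invoking $\pi(u,v)^*=\pi(-u,-v)$, and performing two changes of variable (in $(u,v)$ and in $\lambda$); a single sign slip would destroy the matching with (i) and (ii). The Fubini and Parseval manipulations are a secondary technical point, best settled by first establishing the identity on the dense subspace of $F$ for which $f_\lambda$ is smooth and compactly supported in $(\lambda,z)$, and then extending the Bessel bound by density.
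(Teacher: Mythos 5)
Your proposal is correct in outline and follows essentially the same route as the paper's proof: both periodize the $\R^*$-Fourier side over the lattice $e^{b\z}$ to turn the sum over $p$ into a diagonal-plus-off-diagonal expression, identify $g_{k,l,m}$ via the Weyl inversion phase computation, and then bound the diagonal (including the $j\neq j^\prime$ cross terms) by hypothesis (i) and the off-diagonal by a Schur-type estimate using the symmetric kernel $\alpha$ and hypothesis (ii), arriving at the same constant $(2B_1+B_2)/b$. The only differences are the order of operations (you apply the Weyl isometry pointwise in $\lambda$ before the $\R^*$-Plancherel, the paper does the reverse) and minor choices such as AM--GM versus a second Cauchy--Schwarz in the off-diagonal estimate.
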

\begin{proof}
Let $F\in L^2(\R^*,\b;d\kappa)$ and $\mathcal{G}_{k,l,m}=M_{a(2k,l,m)}\mathcal{G}$. Then by \eqref{25}
\begin{align*}
\langle F,T_{e^{bp}}M_{a(2k,l,m)}\mathcal{G}\rangle&=\langle \widehat{F},(T_{e^{bp}}M_{a(2k,l,m)}\mathcal{G})^{\widehat{}}~\rangle\\
&=\sum_{j\in\{1,-1\}}\int_{\R_+^*}\langle \widehat{F}(\nu,j),(T_{e^{bp}}M_{a(2k,l,m)}\mathcal{G})^{\widehat{}}~(\nu,j)\rangle~\frac{d\nu}{\nu}\\
&=\sum_{j\in\{1,-1\}}\int_{\R_+^*}\langle \widehat{F}(\nu,j),(M_{a(2k,l,m)}\mathcal{G})^{\widehat{}}~(\nu,j)\rangle(\nu,j)(e^{bp})~\frac{d\nu}{\nu}.
\end{align*}
From the definition of the characters, we have $(\nu,j)(e^{bp})=e^{2\pi ibp\log\nu}$, for $j\in\{1,-1\}$. Hence
\begin{align*}
\langle F,T_{e^{bp}}M_{a(2k,l,m)}\mathcal{G}\rangle&=\sum_{j\in\{1,-1\}}\int_{\R_+^*}\langle \widehat{F}(\nu,j),(M_{a(2k,l,m)}\mathcal{G})^{\widehat{}}~(\nu,j)\rangle e^{2\pi ibp\log\nu}~\frac{d\nu}{\nu}\\
&=\sum_{j\in\{1,-1\}}\sum_{s\in\z}\int_{e^{s/b}}^{e^{(s+1)/b}}\langle \widehat{F}(\nu,j),(M_{a(2k,l,m)}\mathcal{G})^{\widehat{}}~(\nu,j)\rangle e^{2\pi ibp\log\nu}~\frac{d\nu}{\nu}.
\end{align*}
By applying change of variables and taking summation inside the integral, we get
\begin{align*}
\langle F,T_{e^{bp}}M_{a(2k,l,m)}\mathcal{G}\rangle&=\sum_{j\in\{1,-1\}}\sum_{s\in\z}\int_1^{e^{1/b}}\langle \widehat{F}(\nu e^{s/b},j),(M_{a(2k,l,m)}\mathcal{G})^{\widehat{}}~(\nu e^{s/b},j)\rangle e^{2\pi ibp\log(\nu e^{s/b})}~\frac{d\nu}{\nu}\\
&=\int_1^{e^{1/b}}\sum_{j\in\{1,-1\}}\sum_{s\in\z}\langle \widehat{F}(\nu e^{s/b},j),(M_{a(2k,l,m)}\mathcal{G})^{\widehat{}}~(\nu e^{s/b},j)\rangle e^{2\pi ibp\log(\nu e^{s/b})}~\frac{d\nu}{\nu}\\
&=\int_1^{e^{1/b}}\sum_{j\in\{1,-1\}}\sum_{s\in\z}\langle \widehat{F}(\nu e^{s/b},j),(M_{a(2k,l,m)}\mathcal{G})^{\widehat{}}~(\nu e^{s/b},j)\rangle e^{2\pi ibp\log\nu}~\frac{d\nu}{\nu}\\
&=\int_1^{e^{1/b}}\Phi_{k,l,m}(\nu)e^{2\pi ibp\log\nu}~\frac{d\nu}{\nu}\ ,
\end{align*}
say. Now by applying change of variables, we get
\begin{align*}
\langle F,T_{e^{bp}}M_{a(2k,l,m)}\mathcal{G}\rangle&=\int_0^{1/b}\Psi_{k,l,m}(\nu)e^{2\pi ibp\nu}~d\nu~,
\end{align*}
where $\Psi_{k,l,m}(\nu)=\Phi_{k,l,m}(e^{\nu})$. Thus $\langle F,T_{e^{bp}}M_{a(2k,l,m)}\mathcal{G}\rangle=\frac{1}{b}\widehat{\Psi}_{k,l,m}(-p)$. Using the Plancherel formula of $L^2(0,1/b)$, we get
\begin{align*}
\sum_{p\in\z}|\langle F,T_{e^{bp}}M_{a(2k,l,m)}\mathcal{G}\rangle|^2&=\frac{1}{b^2}\sum_{p\in\z}|\widehat{\Psi}_{k,l,m}(-p)|^2\\
&=\frac{1}{b}\int_0^{1/b}|\Psi_{k,l,m}(\nu)|^2~d\nu\\
&=\frac{1}{b}\int_0^{1/b}|\Phi_{k,l,m}(e^{\nu})|^2~d\nu\\
&=\frac{1}{b}\int_1^{e^{1/b}}|\Phi_{k,l,m}(\nu)|^2~\frac{d\nu}{\nu}~,
\end{align*}
by applying change of variables. Now
\begin{align*}
&b\sum_{p\in\z}|\langle F,T_{e^{bp}}M_{a(2k,l,m)}\mathcal{G}\rangle|^2=\int_1^{e^{1/b}}\Phi_{k,l,m}(\nu)\overline{\Phi_{k,l,m}(\nu)}~\frac{d\nu}{\nu}\\
&=\int_1^{e^{1/b}}\hspace{-5 mm}\sum_{j,j^\prime\in\{1,-1\}}\sum_{s,s^\prime\in\z}\langle \widehat{F}(\nu e^{s/b},j),(M_{a(2k,l,m)}\mathcal{G})^{\widehat{}}~(\nu e^{s/b},j)\rangle
\overline{\langle \widehat{F}(\nu e^{s^\prime/b},j^\prime),(M_{a(2k,l,m)}\mathcal{G})^{\widehat{}}~(\nu e^{s^\prime/b},j^\prime)\rangle}~\frac{d\nu}{\nu}\\
&=\sum_{s\in\z}\int_1^{e^{1/b}}\hspace{-6 mm}\sum_{j,j^\prime\in\{1,-1\}}\hspace{-4 mm}\langle \widehat{F}(\nu e^{s/b},j),(M_{a(2k,l,m)}\mathcal{G})^{\widehat{}}~(\nu e^{s/b},j)\rangle\sum_{s^\prime\in\z}\overline{\langle \widehat{F}(\nu e^{s^\prime/b},j^\prime),(M_{a(2k,l,m)}\mathcal{G})^{\widehat{}}~(\nu e^{s^\prime/b},j^\prime)\rangle}~\frac{d\nu}{\nu}\\
&=\sum_{s\in\z}\hspace{-0.5 mm}\int_{e^{\frac{s}{b}}}^{e^{\frac{s+1}{b}}}\hspace{-6 mm}\sum_{j,j^\prime\in\{1,-1\}}\hspace{-4 mm}\langle \widehat{F}(\nu,j),(M_{a(2k,l,m)}\mathcal{G})^{\widehat{}}~(\nu,j)\rangle\sum_{s^\prime\in\z}\overline{\langle \widehat{F}(\nu e^{(s^\prime-s)/b},j^\prime),(M_{a(2k,l,m)}\mathcal{G})^{\widehat{}}~(\nu e^{(s^\prime-s)/b},j^\prime)\rangle}~\frac{d\nu}{\nu},
\end{align*}
using change of variables. Thus
\begin{align*}
&b\sum_{p\in\z}|\langle F,T_{e^{bp}}M_{a(2k,l,m)}\mathcal{G}\rangle|^2\\
&=\sum_{s\in\z}\int_{e^{s/b}}^{e^{(s+1)/b}}\hspace{-5 mm}\sum_{j,j^\prime\in\{1,-1\}}\hspace{-4 mm}\langle \widehat{F}(\nu,j),(M_{a(2k,l,m)}\mathcal{G})^{\widehat{}}~(\nu,j)\rangle\sum_{s^\prime\in\z}\overline{\langle \widehat{F}(\nu e^{s^\prime/b},j^\prime),(M_{a(2k,l,m)}\mathcal{G})^{\widehat{}}~(\nu e^{s^\prime/b},j^\prime)\rangle}~\frac{d\nu}{\nu}\\
&=\int_{\R_+^*}\sum_{j,j^\prime\in\{1,-1\}}\langle \widehat{F}(\nu,j),(M_{a(2k,l,m)}\mathcal{G})^{\widehat{}}~(\nu,j)\rangle\sum_{s^\prime\in\z}\overline{\langle \widehat{F}(\nu e^{s^\prime/b},j^\prime),(M_{a(2k,l,m)}\mathcal{G})^{\widehat{}}~(\nu e^{s^\prime/b},j^\prime)\rangle}~\frac{d\nu}{\nu}\\
&=\int_{\R_+^*}\sum_{j,j^\prime\in\{1,-1\}}\langle \widehat{F}(\nu,j),(M_{a(2k,l,m)}\mathcal{G})^{\widehat{}}~(\nu,j)\rangle\overline{\langle \widehat{F}(\nu,j^\prime),(M_{a(2k,l,m)}\mathcal{G})^{\widehat{}}~(\nu,j^\prime)\rangle}~\frac{d\nu}{\nu}~+\\
&\hspace{1 cm}\int_{\R_+^*}\sum_{j,j^\prime\in\{1,-1\}}\sum_{s^\prime\in\z\setminus\{0\}}\langle \widehat{F}(\nu,j),(M_{a(2k,l,m)}\mathcal{G})^{\widehat{}}~(\nu,j)\rangle\overline{\langle \widehat{F}(\nu e^{s^\prime/b},j^\prime),(M_{a(2k,l,m)}\mathcal{G})^{\widehat{}}~(\nu e^{s^\prime/b},j^\prime)\rangle}~\frac{d\nu}{\nu}\\
&=\sum_{j,j^\prime\in\{1,-1\}}\int_{\R_+^*}\langle \widehat{F}(\nu,j),(M_{a(2k,l,m)}\mathcal{G})^{\widehat{}}~(\nu,j)\rangle\overline{\langle \widehat{F}(\nu,j^\prime),(M_{a(2k,l,m)}\mathcal{G})^{\widehat{}}~(\nu,j^\prime)\rangle}~\frac{d\nu}{\nu}~+\\
&\hspace{1 cm}\sum_{j,j^\prime\in\{1,-1\}}\sum_{s^\prime\in\z\setminus\{0\}}\int_{\R_+^*}\langle \widehat{F}(\nu,j),(M_{a(2k,l,m)}\mathcal{G})^{\widehat{}}~(\nu,j)\rangle\overline{\langle \widehat{F}(\nu e^{s^\prime/b},j^\prime),(M_{a(2k,l,m)}\mathcal{G})^{\widehat{}}~(\nu e^{s^\prime/b},j^\prime)\rangle}~\frac{d\nu}{\nu}\\
&=\sum_{j,j^\prime\in\{1,-1\}}\theta_{k,l,m,j,j^\prime}^{(1)}~~+\sum_{j,j^\prime\in\{1,-1\}}\theta_{k,l,m,j,j^\prime}^{(2)}~,\numberthis \label{1}
\end{align*}
say. Now for $j\in\{1,-1\}$,
\begin{align*}
\sum_{k,l\in\Z,m\in\z}\theta_{k,l,m,j,j}^{(1)}&=\sum_{k,l\in\Z,m\in\z}\int_{\R_+^*}|\langle\widehat{F}(\nu,j),(M_{a(2k,l,m)}\mathcal{G})^{\widehat{}}~(\nu,j)\rangle|^2~\frac{d\nu}{\nu}\\
&=\sum_{k,l\in\Z,m\in\z}\int_{\R_+^*}|\langle\widehat{F}(\nu,j),\widehat{\mathcal{G}}_{k,l,m}~(\nu,j)\rangle|^2~\frac{d\nu}{\nu}.
\end{align*}
We can write $\widehat{F}(\nu,j)=W(f(\cdot,\nu,j)),\ \widehat{\mathcal{G}}_{k,l,m}~(\nu,j)=W(g_{k,l,m}(\cdot,\nu,j))\in\b(L^2(\R^n))$ for unique $f(\cdot,\nu,j),\ g_{k,l,m}(\cdot,\nu,j)\in L^2(\R^{2n})$, as the Weyl transform $W$ is an isometric isomorphism of $L^2(\R^{2n})$ onto $\b(L^2(\R^n))$. Thus
\begin{align*}
\sum_{k,l\in\Z,m\in\z}\theta_{k,l,m,j,j}^{(1)}&=\sum_{k,l\in\Z,m\in\z}\int_{\R_+^*}|\langle W(f(\cdot,\nu,j)),W(g_{k,l,m}(\cdot,\nu,j))\rangle|^2~\frac{d\nu}{\nu}.\numberthis\label{10}
\end{align*} 
By inversion formula for the Weyl transform, we can write
\begin{align*}
g_{k,l,m}(z,\nu,j)=tr(\pi (z)^*W(g_{k,l,m}(\cdot,\nu,j)))=tr(\pi (z)^*\widehat{\mathcal{G}}_{k,l,m}~(\nu,j)).
\end{align*}
Now for the orthonormal basis $\{\phi_s:s\in\z\}$ in $\l$, we have
\begin{align*}
g_{k,l,m}(z,\nu,j)&=\sum_{s\in\z}\langle\phi_s,\pi (z)^*\widehat{\mathcal{G}}_{k,l,m}~(\nu,j)\phi_s\rangle\\
&=\sum_{s\in\z}\Bigg\langle\phi_s,\pi (z)^*\Bigg(\int_{\R^*}\mathcal{G}_{k,l,m}(\lambda)\ \overline{(\nu,j)(\lambda)}\ \frac{d\lambda}{|\lambda|}\Bigg)\phi_s\Bigg\rangle\\
&=\sum_{s\in\z}\int_{\R^*}\langle\phi_s,\pi(z)^*\mathcal{G}_{k,l,m}(\lambda)\phi_s\rangle\ \overline{(\nu,j)(\lambda)}\ \frac{d\lambda}{|\lambda|}\\
&=\int_{\R^*}\sum_{s\in\z}\langle\phi_s,\pi(z)^*\mathcal{G}_{k,l,m}(\lambda)\phi_s\rangle\ \overline{(\nu,j)(\lambda)}\ \frac{d\lambda}{|\lambda|}\\
&=\int_{\R^*}tr(\pi(z)^*\mathcal{G}_{k,l,m}(\lambda))\ \overline{(\nu,j)(\lambda)}\ \frac{d\lambda}{|\lambda|}\\
&=\int_{\R^*}tr(\pi(z)^*M_{a(2k,l,m)}\mathcal{G}(\lambda))\ \overline{(\nu,j)(\lambda)}\ \frac{d\lambda}{|\lambda|}\\
&=\int_{\R^*}tr(\pi(z)^*\pi_\lambda(a(2k,l,m))\mathcal{G}(\lambda))\ \overline{(\nu,j)(\lambda)}\ \frac{d\lambda}{|\lambda|}\\
&=\int_{\R^*}tr(\pi(-x,-y,0)\pi(2a\lambda k,al,\lambda am)\mathcal{G}(\lambda))\ \overline{(\nu,j)(\lambda)}\ \frac{d\lambda}{|\lambda|}\\
&=\int_{\R^*}e^{2\pi i(\lambda am-\lambda ak\cdot y+\frac{a}{2}l\cdot x)}tr(\pi(2\lambda ak-x,al-y,0)\mathcal{G}(\lambda))\ \overline{(\nu,j)(\lambda)}\ \frac{d\lambda}{|\lambda|}\\
&=e^{\pi ial\cdot x}\int_{\R^*}e^{2\pi i\lambda a(m-k\cdot y)}tr(\pi(2\lambda ak-x,al-y,0)\mathcal{G}(\lambda))\ \overline{(\nu,j)(\lambda)}\ \frac{d\lambda}{|\lambda|}\\
&=e^{\pi ial\cdot x}\int_{\R^*}H_{k,l,m}(z,\lambda)\ \overline{(\nu,j)(\lambda)}\ \frac{d\lambda}{|\lambda|}\ ,
\end{align*}
say. Then $g_{k,l,m}(z,\nu,j)=\mathcal{F}_2H_{k,l,m}(z,\cdot)(\nu,j)$, where $\mathcal{F}_2$ denotes the group Fourier transform on $\R^*$ in the second variable. Now using the fact that the Weyl transform is an isometric isomorphism of $L^2(\R^n)$ onto $\b(L^2(\R^n))$ in \eqref{10}, we get
\begin{align*}
\sum_{k,l\in\Z,m\in\z}\theta_{k,l,m,j,j}^{(1)}&=\sum_{k,l\in\Z,m\in\z}\int_{\R_+^*}|\langle f(\cdot,\nu,j)),g_{k,l,m}(\cdot,\nu,j)\rangle|^2~\frac{d\nu}{\nu}\\
&\leq\sum_{k,l\in\Z,m\in\z}\int_{\R_+^*}\|f(\cdot,\nu,j)\|^2~\|g_{k,l,m}(\cdot,\nu,j)\|^2~\frac{d\nu}{\nu}\\
&=\int_{\R_+^*}\|f(\cdot,\nu,j)\|^2\sum_{k,l\in\Z,m\in\z}\|g_{k,l,m}(\cdot,\nu,j)\|^2~\frac{d\nu}{\nu}\\
&\leq B_1\int_{\R_+^*}\|f(\cdot,\nu,j)\|^2~\frac{d\nu}{\nu}~,\numberthis\label{2}
\end{align*}
by our hypothesis. For $k,l\in\Z,m\in\z$,
\begin{align*}
&\theta_{k,l,m,1,-1}^{(1)}+\theta_{k,l,m,-1,1}^{(1)}\\
&=\int_{\R_+^*}\langle \widehat{F}(\nu,1),(M_{a(2k,l,m)}\mathcal{G})^{\widehat{}}~(\nu,1)\rangle\overline{\langle \widehat{F}(\nu,-1),(M_{a(2k,l,m)}\mathcal{G})^{\widehat{}}~(\nu,-1)\rangle}~\frac{d\nu}{\nu}~+\\
&\int_{\R_+^*}\langle \widehat{F}(\nu,-1),(M_{a(2k,l,m)}\mathcal{G})^{\widehat{}}~(\nu,-1)\rangle\overline{\langle \widehat{F}(\nu,1),(M_{a(2k,l,m)}\mathcal{G})^{\widehat{}}~(\nu,1)\rangle}~\frac{d\nu}{\nu}\\
&=2~\Re\Bigg(\int_{\R_+^*}\langle \widehat{F}(\nu,1),(M_{a(2k,l,m)}\mathcal{G})^{\widehat{}}~(\nu,1)\rangle\overline{\langle \widehat{F}(\nu,-1),(M_{a(2k,l,m)}\mathcal{G})^{\widehat{}}~(\nu,-1)\rangle}~\frac{d\nu}{\nu}\Bigg)\\
&\leq 2~\Bigg|\int_{\R_+^*}\langle \widehat{F}(\nu,1),(M_{a(2k,l,m)}\mathcal{G})^{\widehat{}}~(\nu,1)\rangle\overline{\langle \widehat{F}(\nu,-1),(M_{a(2k,l,m)}\mathcal{G})^{\widehat{}}~(\nu,-1)\rangle}~\frac{d\nu}{\nu}\Bigg|\\
&\leq 2~\int_{\R_+^*}|\langle \widehat{F}(\nu,1),(M_{a(2k,l,m)}\mathcal{G})^{\widehat{}}~(\nu,1)\rangle|~|\langle \widehat{F}(\nu,-1),(M_{a(2k,l,m)}\mathcal{G})^{\widehat{}}~(\nu,-1)\rangle|~\frac{d\nu}{\nu}\\
&\leq\int_{\R_+^*}|\langle \widehat{F}(\nu,1),(M_{a(2k,l,m)}\mathcal{G})^{\widehat{}}~(\nu,1)\rangle|^2~\frac{d\nu}{\nu}+\int_{\R_+^*}|\langle \widehat{F}(\nu,-1),(M_{a(2k,l,m)}\mathcal{G})^{\widehat{}}~(\nu,-1)\rangle|^2~\frac{d\nu}{\nu}\\
&=\int_{\R_+^*}|\langle\widehat{F}(\nu,1),\widehat{\mathcal{G}}_{k,l,m}~(\nu,1)\rangle|^2~\frac{d\nu}{\nu}+\int_{\R_+^*}|\langle\widehat{F}(\nu,-1),\widehat{\mathcal{G}}_{k,l,m}~(\nu,-1)\rangle|^2~\frac{d\nu}{\nu}\\
&=\int_{\R_+^*}|\langle f(\cdot,\nu,1)),g_{k,l,m}(\cdot,\nu,1)\rangle|^2~\frac{d\nu}{\nu}+\int_{\R_+^*}|\langle f(\cdot,\nu,-1)),g_{k,l,m}(\cdot,\nu,-1)\rangle|^2~\frac{d\nu}{\nu}\ ,
\end{align*}
by using $\widehat{F}(\nu,j)=W(f(\cdot,\nu,j)),\ \widehat{\mathcal{G}}_{k,l,m}~(\nu,j)=W(g_{k,l,m}(\cdot,\nu,j))$ and Plancherel formula for the Weyl transform. Thus for $k,l\in\Z,m\in\z$
\begin{align*}
\theta_{k,l,m,1,-1}^{(1)}&+\theta_{k,l,m,-1,1}^{(1)}\\
&\leq\int_{\R_+^*}\|f(\cdot,\nu,1)\|^2~\|g_{k,l,m}(\cdot,\nu,1)\|^2~\frac{d\nu}{\nu}+\int_{\R_+^*}\|f(\cdot,\nu,-1)\|^2~\|g_{k,l,m}(\cdot,\nu,-1)\|^2~\frac{d\nu}{\nu}.\numberthis\label{3}
\end{align*} 
From \eqref{1} we can write
\begin{align*}
b\sum_{p\in\z}|\langle F,T_{e^{bp}}M_{a(2k,l,m)}\mathcal{G}\rangle|^2&=\sum_{j\in\{1,-1\}}\theta_{k,l,m,j,j}^{(1)}~+\theta_{k,l,m,1,-1}^{(1)}+\theta_{k,l,m,-1,1}^{(1)}+\sum_{j,j^\prime\in\{1,-1\}}\theta_{k,l,m,j,j^\prime}^{(2)}.
\end{align*}
Taking summation over all $k,l,m$ and making use of \eqref{2}, we get
\begin{align*}
&b\sum_{k,l\in\Z,m,p\in\z}|\langle F,T_{e^{bp}}M_{a(2k,l,m)}\mathcal{G}\rangle|^2\\
&=\sum_{\substack{k,l\in\Z,m\in\z \\ j\in\{1,-1\}}}\theta_{k,l,m,j,j}^{(1)}~~+\sum_{k,l\in\Z,m\in\z}\theta_{k,l,m,1,-1}^{(1)}+\sum_{k,l\in\Z,m\in\z}\theta_{k,l,m,-1,1}^{(1)}+\sum_{\substack{k,l\in\Z,m\in\z \\ j,j^\prime\in\{1,-1\}}}\theta_{k,l,m,j,j^\prime}^{(2)}\\
&=\sum_{\substack{k,l\in\Z,m\in\z \\ j\in\{1,-1\}}}\theta_{k,l,m,j,j}^{(1)}\ +R\ ,
\end{align*}
say. Then
\begin{align*}
b\sum_{k,l\in\Z,m,p\in\z}|\langle F,T_{e^{bp}}M_{a(2k,l,m)}\mathcal{G}\rangle|^2&\leq B_1\sum_{j\in\{1,-1\}}\int_{\R_+^*}\|f(\cdot,\nu,j)\|^2~\frac{d\nu}{\nu}\ +R\\
&=B_1\sum_{j\in\{1,-1\}}\int_{\R_+^*}\|W(f(\cdot,\nu,j))\|^2~\frac{d\nu}{\nu}\ +R\\
&=B_1\sum_{j\in\{1,-1\}}\int_{\R_+^*}\|\widehat{F}(\nu,j)\|^2~\frac{d\nu}{\nu}\ +R\\
&=B_1\|F\|^2\ +R\\
&=B_1\|F\|^2\ +R.\numberthis\label{4}
\end{align*}
Indeed $R$ is a real number. Now
\begin{align*}
|R|&\leq\sum_{k,l\in\Z,m\in\z}|\theta_{k,l,m,1,-1}^{(1)}+\theta_{k,l,m,-1,1}^{(1)}|\ +\sum_{k,l\in\Z,m\in\z}\sum_{j,j^\prime\in\{1,-1\}}|\theta_{k,l,m,j,j^\prime}^{(2)}|.\numberthis\label{5}
\end{align*}
Taking modulus and then summation over all $k,l,m$ in both sides of \eqref{3}, we have
\begin{align*}
\sum_{k,l\in\Z,m\in\z}|\theta_{k,l,m,1,-1}^{(1)}+\theta_{k,l,m,-1,1}^{(1)}|&\leq\sum_{k,l\in\Z,m\in\z}\sum_{j\in\{1,-1\}}\int_{\R_+^*}\|f(\cdot,\nu,j)\|^2~\|g_{k,l,m}(\cdot,\nu,j)\|^2~\frac{d\nu}{\nu}\\
&=\sum_{j\in\{1,-1\}}\int_{\R_+^*}\|f(\cdot,\nu,j)\|^2\sum_{k,l\in\Z,m\in\z}\|g_{k,l,m}(\cdot,\nu,j)\|^2~\frac{d\nu}{\nu}\\
&\leq B_1\sum_{j\in\{1,-1\}}\int_{\R_+^*}\|f(\cdot,\nu,j)\|^2\\
&=B_1\|F\|^2.\numberthis\label{6}
\end{align*}
Now 
\begin{align*}
|\theta_{k,l,m,j,j^\prime}^{(2)}|&\leq\sum_{s^\prime\in\z\setminus\{0\}}\int_{\R_+^*}|\langle \widehat{F}(\nu,j),(M_{a(2k,l,m)}\mathcal{G})^{\widehat{}}~(\nu,j)\rangle|~|\langle \widehat{F}(\nu e^{s^\prime/b},j^\prime),(M_{a(2k,l,m)}\mathcal{G})^{\widehat{}}~(\nu e^{s^\prime/b},j^\prime)\rangle|~\frac{d\nu}{\nu}\\
&=\sum_{s^\prime\in\z\setminus\{0\}}\int_{\R_+^*}|\langle\widehat{F}(\nu,j),\widehat{\mathcal{G}}_{k,l,m}~(\nu,j)\rangle|\ |\langle\widehat{F}(\nu e^{s^\prime/b},j^\prime),\widehat{\mathcal{G}}_{k,l,m}~(\nu e^{s^\prime/b},j^\prime)\rangle|~\frac{d\nu}{\nu}\\
&=\sum_{s^\prime\in\z\setminus\{0\}}\int_{\R_+^*}\hspace{-2 mm}|\langle W(f(\cdot,\nu,j)),W(g_{k,l,m}(\cdot,\nu,j))\rangle|~|\langle W(f(\cdot,\nu e^{s^\prime/b},j^\prime)),W(g_{k,l,m}(\cdot,\nu e^{s^\prime/b},j^\prime))\rangle|~\frac{d\nu}{\nu}\\
&=\sum_{s^\prime\in\z\setminus\{0\}}\int_{\R_+^*}|\langle f(\cdot,\nu,j),g_{k,l,m}(\cdot,\nu,j)\rangle|~|\langle f(\cdot,\nu e^{s^\prime/b},j^\prime),g_{k,l,m}(\cdot,\nu e^{s^\prime/b},j^\prime)\rangle|~\frac{d\nu}{\nu}\\
&\leq\sum_{s^\prime\in\z\setminus\{0\}}\int_{\R_+^*}\int_{\c^n\times\c^n}|f(z,\nu,j)||f(z^\prime,\nu e^{s^\prime/b},j^\prime)||g_{k,l,m}(z,\nu,j)||g_{k,l,m}(z^\prime,\nu e^{s^\prime/b},j^\prime)|\ dzdz^\prime\frac{d\nu}{\nu}.
\end{align*}
Now considering summation over all $k,l,m$ and then taking the summation over $k,l,m$ inside the integral, we get 
\begin{align*}
\sum_{k,l\in\Z,m\in\z}|\theta_{k,l,m,j,j^\prime}^{(2)}|&\leq\sum_{s^\prime\in\z\setminus\{0\}}\int_{\R_+^*}\int_{\c^n\times\c^n}|f(z,\nu,j)||f(z^\prime,\nu e^{s^\prime/b},j^\prime)|\ H_{s^\prime,j,j^\prime}(z,z^\prime,\nu)\ dzdz^\prime\frac{d\nu}{\nu},
\end{align*} 
where we put
\begin{align*}
H_{s^\prime,j,j^\prime}(z,z^\prime,\nu)&=\sum_{k,l\in\Z,m\in\z}|g_{k,l,m}(z,\nu,j)||g_{k,l,m}(z^\prime,\nu e^{s^\prime/b},j^\prime)|\ \geq 0.
\end{align*}
An application of Fubini's theorem leads to
\begin{align*}
\sum_{k,l\in\Z,m\in\z}&|\theta_{k,l,m,j,j^\prime}^{(2)}|\\
&\leq\sum_{s^\prime\in\z\setminus\{0\}}\int_{\c^n\times\c^n}\int_{\R_+^*}\big(|f(z,\nu,j)|\ H_{s^\prime,j,j^\prime}^{1/2}(z,z^\prime,\nu)\big)\big(|f(z^\prime,\nu e^{s^\prime/b},j^\prime)|\ H_{s^\prime,j,j^\prime}^{1/2}(z,z^\prime,\nu)\big)\ \frac{d\nu}{\nu}dzdz^\prime.
\end{align*}
Now using Cauchy Schwarz inequality, we get
\begin{align*}
\sum_{k,l\in\Z,m\in\z}|\theta_{k,l,m,j,j^\prime}^{(2)}|&\leq\sum_{s^\prime\in\z\setminus\{0\}}\int_{\c^n\times\c^n}\Bigg(\int_{\R_+^*}|f(z,\nu,j)|^2\ H_{s^\prime,j,j^\prime}(z,z^\prime,\nu)\ \frac{d\nu}{\nu}\Bigg)^{1/2}\times\\
&\Bigg(\int_{\R_+^*}|f(z^\prime,\nu e^{s^\prime/b},j^\prime)|^2\ H_{s^\prime,j,j^\prime}(z,z^\prime,\nu)\ \frac{d\nu}{\nu}\Bigg)^{1/2}dzdz^\prime.
\end{align*}
Taking summation over all $j,j^\prime$ and then again applying Cauchy-Schwarz inequality for the summation, we get
\begin{align*}
\sum_{\substack{k,l\in\Z,m\in\z \\ j,j^\prime\in\{1,-1\}}}|\theta_{k,l,m,j,j^\prime}^{(2)}|&\leq\int_{\c^n\times\c^n}\sum_{\substack{ s^\prime\in\z\setminus\{0\} \\ j,j^\prime\in\{1,-1\}}}\Bigg(\int_{\R_+^*}|f(z,\nu,j)|^2\ H_{s^\prime,j,j^\prime}(z,z^\prime,\nu)\ \frac{d\nu}{\nu}\Bigg)^{1/2}\times\\
&\Bigg(\int_{\R_+^*}|f(z^\prime,\nu e^{s^\prime/b},j^\prime)|^2\ H_{s^\prime,j,j^\prime}(z,z^\prime,\nu)\ \frac{d\nu}{\nu}\Bigg)^{1/2}dzdz^\prime\\
&\leq\int_{\c^n\times\c^n}\Bigg(\sum_{\substack{ s^\prime\in\z\setminus\{0\} \\ j,j^\prime\in\{1,-1\}}}\int_{\R_+^*}|f(z,\nu,j)|^2\ H_{s^\prime,j,j^\prime}(z,z^\prime,\nu)\ \frac{d\nu}{\nu}\Bigg)^{1/2}\times\\
&\Bigg(\sum_{\substack{ s^\prime\in\z\setminus\{0\} \\ j,j^\prime\in\{1,-1\}}}\int_{\R_+^*}|f(z^\prime,\nu e^{s^\prime/b},j^\prime)|^2\ H_{s^\prime,j,j^\prime}(z,z^\prime,\nu)\ \frac{d\nu}{\nu}\Bigg)^{1/2}dzdz^\prime.
\end{align*}
Using change of variables in the second product, we get
\begin{align*}
\sum_{\substack{k,l\in\Z,m\in\z \\ j,j^\prime\in\{1,-1\}}}|\theta_{k,l,m,j,j^\prime}^{(2)}|
&\leq\int_{\c^n\times\c^n}\Bigg(\sum_{\substack{ s^\prime\in\z\setminus\{0\} \\ j,j^\prime\in\{1,-1\}}}\int_{\R_+^*}|f(z,\nu,j)|^2\ H_{s^\prime,j,j^\prime}(z,z^\prime,\nu)\ \frac{d\nu}{\nu}\Bigg)^{1/2}\times\\
&\Bigg(\sum_{\substack{ s^\prime\in\z\setminus\{0\} \\ j,j^\prime\in\{1,-1\}}}\int_{\R_+^*}|f(z^\prime,\nu ,j^\prime)|^2\ H_{s^\prime,j,j^\prime}(z,z^\prime,\nu e^{-s^\prime/b})\ \frac{d\nu}{\nu}\Bigg)^{1/2}dzdz^\prime\\
&\leq\int_{\c^n\times\c^n}\Bigg(\sum_{j,j^\prime\in\{1,-1\}}\int_{\R_+^*}|f(z,\nu,j)|^2\ \sum_{s^\prime\in\z\setminus\{0\}}H_{s^\prime,j,j^\prime}(z,z^\prime,\nu)\ \frac{d\nu}{\nu}\Bigg)^{1/2}\times\\
&\Bigg(\sum_{ j,j^\prime\in\{1,-1\}}\int_{\R_+^*}|f(z^\prime,\nu ,j^\prime)|^2\ \sum_{s^\prime\in\z\setminus\{0\}}H_{s^\prime,j^\prime,j}(z^\prime,z,\nu)\ \frac{d\nu}{\nu}\Bigg)^{1/2}dzdz^\prime,\numberthis\label{7}
\end{align*}
by using the pointwise identity $H_{s^\prime,j,j^\prime}(z,z^\prime,\nu e^{-s^\prime/b})=H_{-s^\prime,j^\prime,j}(z^\prime,z,\nu)$. For $j,j^\prime\in\{1,-1\}$ let us consider the functions
\begin{align*}
\beta_{j,j^\prime}(z,z^\prime)=\esssup_{\nu\in\R_+^*}\sum_{s^\prime\in\z\setminus\{0\}}H_{s^\prime,j,j^\prime}(z,z^\prime,\nu),\ z,z^\prime\in\c^n
\end{align*}
and
\begin{align*}
\alpha_j(z,z^\prime)=\sum_{j^{\prime\prime}\in\{1,-1\}}\beta_{j,j^{\prime\prime}}(z,z^\prime),~~z,z^\prime\in\c^n.
\end{align*}
Then \eqref{7} reduces to
\begin{align*}
\sum_{\substack{k,l\in\Z,m\in\z \\ j,j^\prime\in\{1,-1\}}}|\theta_{k,l,m,j,j^\prime}^{(2)}|&\leq\int_{\c^n\times\c^n}\Bigg(\sum_{j,j^\prime\in\{1,-1\}}\beta_{j,j^\prime}(z,z^\prime)\int_{\R_+^*}|f(z,\nu,j)|^2\ \frac{d\nu}{\nu}\Bigg)^{1/2}\times\\
&\Bigg(\sum_{ j,j^\prime\in\{1,-1\}}\beta_{j^\prime,j}(z^\prime,z)\int_{\R_+^*}|f(z^\prime,\nu ,j^\prime)|^2\ \frac{d\nu}{\nu}\Bigg)^{1/2}dzdz^\prime\\
&\leq\int_{\c^n\times\c^n}\Bigg[\sum_{j\in\{1,-1\}}\Bigg(\int_{\R_+^*}|f(z,\nu,j)|^2\ \frac{d\nu}{\nu}\Bigg)\sum_{j^\prime\in\{1,-1\}}\beta_{j,j^\prime}(z,z^\prime)\Bigg]^{1/2}\times\\
&\Bigg[\sum_{ j^\prime\in\{1,-1\}}\Bigg(\int_{\R_+^*}|f(z^\prime,\nu ,j^\prime)|^2\ \frac{d\nu}{\nu}\Bigg)\sum_{j\in\{1,-1\}}\beta_{j^\prime,j}(z^\prime,z)\Bigg]^{1/2}dzdz^\prime\\
&=\int_{\c^n\times\c^n}\Bigg[\sum_{j\in\{1,-1\}}\Bigg(\int_{\R_+^*}|f(z,\nu,j)|^2\ \frac{d\nu}{\nu}\Bigg)\alpha_j(z,z^\prime)\Bigg]^{1/2}\times\\
&\Bigg[\sum_{ j^\prime\in\{1,-1\}}\Bigg(\int_{\R_+^*}|f(z^\prime,\nu ,j^\prime)|^2\ \frac{d\nu}{\nu}\Bigg)\alpha_{j^\prime}(z^\prime,z)\Bigg]^{1/2}dzdz^\prime.
\end{align*}
Now we define $\alpha(z,z^\prime)=\sum_{j\in\{1,-1\}}\alpha_{j}(z,z^\prime)$, for $z,z^\prime\in\c^n$. Then from above equation, we get
\begin{align*}
\sum_{\substack{k,l\in\Z,m\in\z \\ j,j^\prime\in\{1,-1\}}}|\theta_{k,l,m,j,j^\prime}^{(2)}|&\leq\int_{\c^n\times\c^n}\alpha^{1/2}(z,z^\prime)\Bigg(\sum_{j\in\{1,-1\}}\int_{\R_+^*}|f(z,\nu,j)|^2\ \frac{d\nu}{\nu}\Bigg)^{1/2}\times\\
&\alpha^{1/2}(z^\prime,z)\Bigg(\sum_{ j^\prime\in\{1,-1\}}\int_{\R_+^*}|f(z^\prime,\nu ,j^\prime)|^2\ \frac{d\nu}{\nu}\Bigg)^{1/2}dzdz^\prime\\
&=\int_{\c^n\times\c^n}\big(\alpha^{1/2}(z,z^\prime)\|f(z,\cdot)\|\big)\big(\alpha^{1/2}(z^\prime,z)\|f(z^\prime,\cdot)\|\big)\ dzdz^\prime\\
&\leq\Bigg(\int_{\c^n\times\c^n}\alpha(z,z^\prime)\|f(z,\cdot)\|^2\ dzdz^\prime\Bigg)^{1/2}\Bigg(\int_{\c^n\times\c^n}\alpha(z^\prime,z)\|f(z^\prime,\cdot)\|^2\ dzdz^\prime\Bigg)^{1/2}\\
&=\Bigg(\int_{\c^n}\|f(z,\cdot)\|^2\|\alpha(z,\cdot)\|_{L^1(\c^n)}\ dz\Bigg)^{1/2}\Bigg(\int_{\c^n}\|f(z^\prime,\cdot)\|^2\|\alpha(z^\prime,\cdot)\|_{L^1(\c^n)}\ dz^\prime\Bigg)^{1/2}\\
&=\int_{\c^n}\|f(z,\cdot)\|^2\|\alpha(z,\cdot)\|_{L^1(\c^n)}\ dz\\
&\leq B_2\int_{\c^n}\|f(z,\cdot)\|^2\ dz\ ,\numberthis\label{8}
\end{align*}
by our hypothesis. On the other hand using the Weyl transform, we have
\begin{align*}
\int_{\c^n}\|f(z,\cdot)\|^2\ dz&=\int_{\c^n}\sum_{j\in\{1,-1\}}\int_{\R_+^*}|f(z,\nu,j)|^2\ \frac{d\nu}{\nu}dz\\
&=\sum_{j\in\{1,-1\}}\int_{\R_+^*}\int_{\c^n}|f(z,\nu,j)|^2\ dz\frac{d\nu}{\nu}\\
&=\sum_{j\in\{1,-1\}}\int_{\R_+^*}\|f(\cdot,\nu,j)\|_{L^2(\c^n)}^2\ \frac{d\nu}{\nu}\\
&=\sum_{j\in\{1,-1\}}\int_{\R_+^*}\|W(f(\cdot,\nu,j))\|_\b^2\ \frac{d\nu}{\nu}\\
&=\sum_{j\in\{1,-1\}}\int_{\R_+^*}\|\widehat{F}(\nu,j)\|_\b^2\ \frac{d\nu}{\nu}\\
&=\|F\|_{L^2(\R^*,\b;d\kappa)}^2.
\end{align*}
Thus \eqref{8} becomes
\begin{align}\label{9}
\sum_{\substack{k,l\in\Z,m\in\z \\ j,j^\prime\in\{1,-1\}}}|\theta_{k,l,m,j,j^\prime}^{(2)}|&\leq B_2\|F\|_{L^2(\R^*,\b;d\kappa)}^2.
\end{align}
Using \eqref{9} and \eqref{6} in \eqref{5}, we get $|R|\leq(B_1+B_2)\|F\|_{L^2(\R^*,\b;d\kappa)}^2$. Hence \eqref{4} gives $\mathcal{U}(\mathcal{G})$ is a Bessel sequence for $L^2(\R^*,\b;d\kappa)$.
\end{proof}
\section{Necessary and Sufficient Condition for a Gabor Frame Sequence}\label{z}
Let $P(\mathcal{G})=\overline{span}\ \mathcal{U}(\mathcal{G})$. Consider $F\in span\ \mathcal{U}(\mathcal{G})$. Then $F=\sum_{(k,l,m,p)\in\mathcal{F}}\alpha_{k,l,m,p}T_{e^{bp}}M_{a(2k,l,m)}\mathcal{G}$, for some finite subset $\mathcal{F}$ of $\z^{2n+2}$. For $(\nu,j)\in\R_+^*\times\{1,-1\}$,
\begin{align*}
\widehat{F}(\nu,j)&=\sum_{(k,l,m,p)\in\mathcal{F}}(\alpha_{k,l,m,p}T_{e^{bp}}M_{a(2k,l,m)}\mathcal{G})^{\widehat{}}\ (\nu,j)\\
&=\sum_{(k,l,m,p)\in\mathcal{F}}\alpha_{k,l,m,p}\overline{(\nu,j)(e^{bp})}(M_{a(2k,l,m)}\mathcal{G})^{\widehat{}}\ (\nu,j)\\
&=\sum_{(k,l,m,p)\in\mathcal{F}}\alpha_{k,l,m,p}e^{-2\pi ibp\log\nu}\widehat{\mathcal{G}}_{k,l,m}(\nu,j)\\
&=\sum_{(k,l,m)\in\mathcal{F}^\prime}\rho_{k,l,m}(\nu)\widehat{\mathcal{G}}_{k,l,m}(\nu,j)\ ,
\end{align*}
where $\rho_{k,l,m}(\nu)=\sum_{p\in\mathcal{F}^{\prime\prime}}\alpha_{k,l,m,p}e^{-2\pi ibp\log\nu}$ and $\mathcal{F}^\prime,\mathcal{F}^{\prime\prime}$ are corresponding finite subsets of $\z^{2n+1}$ and $\z$ respectively. Again as in section \ref{b} we can write $\widehat{\mathcal{G}}_{k,l,m}$ in terms of the Weyl transform, namely $\widehat{\mathcal{G}}_{k,l,m}(\nu,j)=W(g_{k,l,m}(\cdot,\nu,j))$. Then $\widehat{F}(\nu,j)=W\big(\sum_{(k,l,m)\in\mathcal{F}^\prime}\rho_{k,l,m}(\nu)g_{k,l,m}(\cdot,\nu,j)\big)$. Hence by Plancherel formula, we have
\begin{align*}
\|F\|_{L^2(\R^*,\b;d\kappa)}^2&=\|\widehat{F}\|_{L^2(\R^*,\b;d\kappa)}^2\\
&=\sum_{j\in\{1,-1\}}\int_{\R_+^*}\|\widehat{F}(\nu,j)\|_\b^2\ \frac{d\nu}{\nu}\\
&=\sum_{j\in\{1,-1\}}\int_{\R_+^*}\bigg\|W\bigg(\sum_{(k,l,m)\in\mathcal{F}^\prime}\rho_{k,l,m}(\nu)g_{k,l,m}(\cdot,\nu,j)\bigg)\bigg\|_\b^2\ \frac{d\nu}{\nu}\\
&=\sum_{j\in\{1,-1\}}\int_{\R_+^*}\bigg\|\sum_{(k,l,m)\in\mathcal{F}^\prime}\rho_{k,l,m}(\nu)g_{k,l,m}(\cdot,\nu,j)\bigg\|_{L^2(\c^n)}^2\ \frac{d\nu}{\nu}\ ,
\end{align*}
by using Plancherel formula for the Weyl transform. Now making use of discretization of $\R^*$, we get
\begin{align*}
\|F\|_{L^2(\R^*,\b;d\kappa)}^2&=\sum_{j\in\{1,-1\}}\sum_{s\in\z}\int_{e^{s/b}}^{e^{(s+1)/b}}\bigg\|\sum_{(k,l,m)\in\mathcal{F}^\prime}\rho_{k,l,m}(\nu)g_{k,l,m}(\cdot,\nu,j)\bigg\|_{L^2(\c^n)}^2\ \frac{d\nu}{\nu}\\
&=\sum_{j\in\{1,-1\}}\sum_{s\in\z}\int_1^{e^{1/b}}\bigg\|\sum_{(k,l,m)\in\mathcal{F}^\prime}\rho_{k,l,m}(\nu e^{s/b})g_{k,l,m}(\cdot,\nu e^{s/b},j)\bigg\|_{L^2(\c^n)}^2\ \frac{d\nu}{\nu}\ ,\numberthis\label{13}
\end{align*}
by applying change of variables. But
\begin{align*}
&\bigg\|\sum_{(k,l,m)\in\mathcal{F}^\prime}\rho_{k,l,m}(\nu e^{s/b})g_{k,l,m}(\cdot,\nu e^{s/b},j)\bigg\|_{L^2(\c^n)}^2\\
&=\bigg\langle \sum_{(k,l,m)\in\mathcal{F}^\prime}\rho_{k,l,m}(\nu e^{s/b})g_{k,l,m}(\cdot,\nu e^{s/b},j),\sum_{(k^\prime,l^\prime,m^\prime)\in\mathcal{F}^\prime}\rho_{k^\prime,l^\prime,m^\prime}(\nu e^{s/b})g_{k^\prime,l^\prime,m^\prime}(\cdot,\nu e^{s/b},j)\bigg\rangle_{L^2(\c^n)}\\
&=\sum_{\substack{(k,l,m)\in\mathcal{F}^\prime \\ (k^\prime,l^\prime,m^\prime)\in\mathcal{F}^\prime}}\rho_{k,l,m}(\nu e^{s/b})\overline{\rho_{k^\prime,l^\prime,m^\prime}(\nu e^{s/b})}\ \big\langle g_{k,l,m}(\cdot,\nu e^{s/b},j),g_{k^\prime,l^\prime,m^\prime}(\cdot,\nu e^{s/b},j)\big\rangle_{L^2(\c^n)}\\
&=\sum_{(k,l,m)\in\mathcal{F}^\prime}|\rho_{k,l,m}(\nu e^{s/b})|^2\ \|g_{k,l,m}(\cdot,\nu e^{s/b},j)\|_{L^2(\c^n)}^2\ +\\
&\hspace{1.5 cm}\sum_{(k,l,m)\neq(k^\prime,l^\prime,m^\prime)}\rho_{k,l,m}(\nu e^{s/b})\overline{\rho_{k^\prime,l^\prime,m^\prime}(\nu e^{s/b})}\ \big\langle g_{k,l,m}(\cdot,\nu e^{s/b},j),g_{k^\prime,l^\prime,m^\prime}(\cdot,\nu e^{s/b},j)\big\rangle_{L^2(\c^n)}.
\end{align*}
Hence \eqref{13} becomes
\begin{align*}
\|F\|_{L^2(\R^*,\b;d\kappa)}^2&=\sum_{\substack{j\in\{1,-1\} \\ s\in\z}}\int_1^{e^{1/b}}\sum_{(k,l,m)\in\mathcal{F}^\prime}|\rho_{k,l,m}(\nu e^{s/b})|^2\ \|g_{k,l,m}(\cdot,\nu e^{s/b},j)\|_{L^2(\c^n)}^2\ \frac{d\nu}{\nu}\ +\\
&\hspace{-1.5 cm}\sum_{\substack{j\in\{1,-1\} \\ s\in\z}}\hspace{-1 mm}\int_1^{e^{1/b}}\hspace{-5 mm}\sum_{(k,l,m)\neq(k^\prime,l^\prime,m^\prime)}\hspace{-3 mm}\rho_{k,l,m}(\nu e^{s/b})\overline{\rho_{k^\prime,l^\prime,m^\prime}(\nu e^{s/b})}\ \big\langle g_{k,l,m}(\cdot,\nu e^{s/b},j),g_{k^\prime,l^\prime,m^\prime}(\cdot,\nu e^{s/b},j)\big\rangle_{L^2(\c^n)}.\numberthis\label{14}
\end{align*}
Assume that $\{g_{k,l,m}(\cdot,\nu e^{s/b},j):k,l\in\Z,m\in\z\}$ is an orthogonal system in $L^2(\c^n)$ for $a.e.\ \nu\in(1,e^{1/b})$ and for each $j\in\{1,-1\},s\in\z$. Then \eqref{14} reduces to
\begin{align*}
\|F\|_{L^2(\R^*,\b;d\kappa)}^2&=\sum_{\substack{j\in\{1,-1\} \\ s\in\z}}\int_1^{e^{1/b}}\sum_{(k,l,m)\in\mathcal{F}^\prime}|\rho_{k,l,m}(\nu e^{s/b})|^2\ \|g_{k,l,m}(\cdot,\nu e^{s/b},j)\|_{L^2(\c^n)}^2\ \frac{d\nu}{\nu}.\numberthis\label{15}
\end{align*}
Using the definition of $\rho_{k,l,m}$, we have
\begin{align*}
\rho_{k,l,m}(\nu e^{s/b})&=\sum_{p\in\mathcal{F}^{\prime\prime}}\alpha_{k,l,m,p}e^{-2\pi ibp\log(\nu e^{s/b})}=\sum_{p\in\mathcal{F}^{\prime\prime}}\alpha_{k,l,m,p}e^{-2\pi ibp\log\nu}=\rho_{k,l,m}(\nu).
\end{align*}
Hence \eqref{15} gives
\begin{align*}
\|F\|_{L^2(\R^*,\b;d\kappa)}^2&=\sum_{\substack{j\in\{1,-1\} \\ s\in\z}}\int_1^{e^{1/b}}\sum_{(k,l,m)\in\mathcal{F}^\prime}|\rho_{k,l,m}(\nu)|^2\ \|g_{k,l,m}(\cdot,\nu e^{s/b},j)\|_{L^2(\c^n)}^2\ \frac{d\nu}{\nu}\\
&=\sum_{\substack{j\in\{1,-1\} \\ (k,l,m)\in\mathcal{F}^\prime}}\int_1^{e^{1/b}}|\rho_{k,l,m}(\nu)|^2\ \sum_{s\in\z}\|g_{k,l,m}(\cdot,\nu e^{s/b},j)\|_{L^2(\c^n)}^2\ \frac{d\nu}{\nu}\\
&=\sum_{(k,l,m)\in\mathcal{F}^\prime}\int_1^{e^{1/b}}|\rho_{k,l,m}(\nu)|^2\ \sum_{j\in\{1,-1\}}\sum_{s\in\z}\|g_{k,l,m}(\cdot,\nu e^{s/b},j)\|_{L^2(\c^n)}^2\ \frac{d\nu}{\nu}\\
&=\sum_{(k,l,m)\in\mathcal{F}^\prime}\int_1^{e^{1/b}}|\rho_{k,l,m}(\nu)|^2\ w_{g_{k,l,m}}(\nu)\ \frac{d\nu}{\nu}\ ,
\end{align*}
where
\begin{align*}
w_{g_{k,l,m}}(\nu)&=\sum_{j\in\{1,-1\}}\sum_{s\in\z}\|g_{k,l,m}(\cdot,\nu e^{s/b},j)\|_{L^2(\c^n)}^2.
\end{align*}
Therefore 
\begin{align*}
\|F\|_{L^2(\R^*,\b;d\kappa)}^2&=\sum_{(k,l,m)\in\mathcal{F}^\prime}\|\rho_{k,l,m}w_{g_{k,l,m}}^{1/2}\|_{L^2((1,e^{1/b});d\kappa)}^2=\big\|\rho\big\|_{\ell^2\big(\z^{2n+1},L^2((1,e^{1/b});d\kappa)\big)}^2\ ,\numberthis\label{16}
\end{align*}
where
\begin{align*}
\rho=\big\{\rho_{k,l,m}w_{g_{k,l,m}}^{1/2}\big\}_{(k,l,m)\in\z^{2n+1}}\in\ell^2\big(\z^{2n+1},L^2((1,e^{1/b});d\kappa)\big)\numberthis\label{17}
\end{align*} 
and $\rho_{k,l,m}=0$ for all $(k,l,m)\notin\mathcal{F}^\prime$.\\

We define $c_{00}\big(\z^{2n+1},L^2((1,e^{1/b});d\kappa)\big)$ to be the set of all $L^2((1,e^{1/b});d\kappa)$ valued sequences having atmost finitely many nonzero functions.
\begin{proposition}\label{c}
Let $\{g_{k,l,m}(\cdot,\nu e^{s/b},j):k,l\in\Z,m\in\z\}$ be an orthogonal system in $L^2(\c^n)$ for $a.e.\ \nu\in(1,e^{1/b})$ and for each $j\in\{1,-1\},s\in\z$. Define 
$$w_{g_{k,l,m}}(\nu)=\sum_{j\in\{1,-1\}}\sum_{s\in\z}\|g_{k,l,m}(\cdot,\nu e^{s/b},j)\|_{L^2(\c^n)}^2\ and\ \rho=\big\{\rho_{k,l,m}w_{g_{k,l,m}}^{1/2}\big\}_{(k,l,m)\in\z^{2n+1}},\ where$$
$\rho_{k,l,m}(\nu)=\sum_{p\in\mathcal{F}^{\prime\prime}}\alpha_{k,l,m,p}e^{-2\pi ibp\log\nu}$, $\nu\in(1,e^{1/b})$. Then the map $F\mapsto\rho$ initially defined on span $\mathcal{U}(\mathcal{G})$ into $c_{00}\big(\z^{2n+1},L^2((1,e^{1/b});d\kappa)\big)$ can be extended to an isometric isomorphism of $P(\mathcal{G})$ onto $\ell^2\big(\z^{2n+1},L^2((1,e^{1/b});d\kappa)\big)$.
\end{proposition}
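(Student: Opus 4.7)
The plan is to read the isometry property directly off identity \eqref{16}, extend the map by continuity to the closure, and then establish surjectivity by proving density of the image of $\text{span}\,\mathcal{U}(\mathcal{G})$. Since $\text{span}\,\mathcal{U}(\mathcal{G})$ is dense in $P(\mathcal{G})$ by definition, the extension step is automatic once well-definedness and the isometric property are verified on the dense subspace.

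For the first step, if a single $F\in\text{span}\,\mathcal{U}(\mathcal{G})$ admits two finite representations, I would apply \eqref{16} to the difference (which represents $0$) to see that $\{(\rho_{k,l,m}-\rho'_{k,l,m})w_{g_{k,l,m}}^{1/2}\}$ has zero $\ell^2$-norm, hence coincides as an element of $\ell^2(\z^{2n+1},L^2((1,e^{1/b});d\kappa))$. This gives well-definedness; linearity is immediate and the isometric property is \eqref{16} itself. A standard extension-by-continuity argument then produces a unique linear isometry $\mathcal{T}:P(\mathcal{G})\to\ell^2(\z^{2n+1},L^2((1,e^{1/b});d\kappa))$, whose range is closed because isometries between Hilbert spaces have closed range.

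Surjectivity is therefore equivalent to density of $\mathcal{T}(\text{span}\,\mathcal{U}(\mathcal{G}))$ in the target space. This image consists of finitely supported sequences whose $(k,l,m)$-th entry has the form $P_{k,l,m}(\nu)w_{g_{k,l,m}}^{1/2}(\nu)$, where $P_{k,l,m}(\nu)=\sum_{p}\alpha_{k,l,m,p}e^{-2\pi ibp\log\nu}$ is a trigonometric polynomial in $\log\nu$. Given an arbitrary $\psi=\{\psi_{k,l,m}\}\in\ell^2(\z^{2n+1},L^2((1,e^{1/b});d\kappa))$, I would first truncate $\psi$ to a finitely supported sequence in the $\ell^2$-norm, then approximate each remaining component $\psi_{k,l,m}$ by a function of the above form: set $\phi_{k,l,m}=\psi_{k,l,m}/w_{g_{k,l,m}}^{1/2}$ on $\{w_{g_{k,l,m}}>0\}$ (extended by zero), observe $\int|\phi_{k,l,m}|^{2}w_{g_{k,l,m}}\,d\kappa=\int|\psi_{k,l,m}|^{2}\,d\kappa$, and reduce the task to approximating $\phi_{k,l,m}$ by trigonometric polynomials in $L^2((1,e^{1/b});w_{g_{k,l,m}}\,d\kappa)$.

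The technical heart of the proof, and what I expect to be the main obstacle, is this density of $\{e^{-2\pi ibp\log\nu}\}_{p\in\z}$ in the weighted space $L^2((1,e^{1/b});w_{g_{k,l,m}}\,d\kappa)$. My plan is to first check that the measure $w_{g_{k,l,m}}\,d\kappa$ has finite total mass on $(1,e^{1/b})$: unfolding the sum $\sum_{s\in\z}\int_{1}^{e^{1/b}}\cdots\,d\kappa$ via the substitution $\mu=\nu e^{s/b}$ turns it into $\sum_{j\in\{1,-1\}}\int_{\R_+^*}\|g_{k,l,m}(\cdot,\mu,j)\|_{L^2(\c^n)}^2\,\frac{d\mu}{\mu}$, which by Plancherel for the Weyl transform and the isometric property of $M_{a(2k,l,m)}$ equals $\|\mathcal{G}\|_{L^2(\R^*,\b;d\kappa)}^{2}<\infty$. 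Under the substitution $u=\log\nu$, $L^2((1,e^{1/b});d\kappa)$ is identified isometrically with $L^2((0,1/b);du)$, and the algebra generated by $\{e^{-2\pi ibpu}\}_{p\in\z}$ separates points of the compact interval $[0,1/b]$ and contains constants, so Stone--Weierstrass yields density in $C([0,1/b])$, and hence in $L^2$ with respect to any finite Borel measure on $[0,1/b]$, in particular with respect to $w_{g_{k,l,m}}\,d\kappa$. Chaining the weighted approximation with the truncation completes the density argument and proves surjectivity.
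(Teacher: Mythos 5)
Your overall strategy coincides with the paper's: the isometry on $\mathrm{span}\,\mathcal{U}(\mathcal{G})$ is read off from \eqref{16}, the map is extended by continuity, and surjectivity is reduced to density of the image in $\ell^2\big(\z^{2n+1},L^2((1,e^{1/b});d\kappa)\big)$. You are in fact more careful than the paper, which settles well-definedness implicitly and disposes of surjectivity with the single remark that $c_{00}$ is dense in $\ell^2$ --- a remark that does not suffice as stated, because the image of $\mathrm{span}\,\mathcal{U}(\mathcal{G})$ is not all of $c_{00}$ but only the finitely supported sequences whose $(k,l,m)$ entry has the form (trigonometric polynomial in $\log\nu$)$\,\times\,w_{g_{k,l,m}}^{1/2}$. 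Your computation that $w_{g_{k,l,m}}\,d\kappa$ has total mass $\|\mathcal{G}_{k,l,m}\|^2<\infty$ and your appeal to density of trigonometric polynomials in $L^2$ of a finite measure supply exactly the content the paper omits.

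There is, however, one genuine gap, and it is inherited from the statement itself. Writing $\Omega_{k,l,m}=\{\nu\in(1,e^{1/b}):w_{g_{k,l,m}}(\nu)\neq 0\}$, every function occurring in the $(k,l,m)$ component of the image vanishes a.e.\ off $\Omega_{k,l,m}$, and so does every $L^2$ limit of such functions; hence the closed range of your isometry is $\bigoplus_{(k,l,m)}L^2(\Omega_{k,l,m};d\kappa)$, which equals the claimed codomain only if each $w_{g_{k,l,m}}>0$ a.e. Concretely, your identity $\int|\phi_{k,l,m}|^2w_{g_{k,l,m}}\,d\kappa=\int|\psi_{k,l,m}|^2\,d\kappa$ holds only when $\psi_{k,l,m}$ is supported in $\Omega_{k,l,m}$; the part of $\psi_{k,l,m}$ living on $\{w_{g_{k,l,m}}=0\}$ cannot be approximated at all. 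So ``onto'' must either be read as ``onto $\bigoplus_{(k,l,m)}L^2(\Omega_{k,l,m};d\kappa)$'' or an a.e.\ positivity hypothesis must be added; the fact that the subsequent theorems impose conditions only on $\Omega_{k,l,m}$ is consistent with the former reading. A second, minor, slip: the family $\{e^{-2\pi i b p u}\}_{p\in\z}$ does not separate the endpoints $0$ and $1/b$ of the interval, so Stone--Weierstrass should be applied on the circle $\R/\tfrac{1}{b}\z$; this is harmless here because $w_{g_{k,l,m}}\,d\kappa$ is absolutely continuous and assigns the endpoints no mass.
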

\begin{proof}
The isometry follows from \eqref{16} and the density argument. Conversely, let $\rho=\\ \big\{\rho_{k,l,m}w_{g_{k,l,m}}^{1/2}\big\}_{(k,l,m)\in\mathcal{F}}\in c_{00}\big(\z^{2n+1},L^2((1,e^{1/b}),d\kappa)\big)$ with $\rho_{k,l,m}(\nu)=\sum_{p\in\mathcal{F}^{\prime\prime}}\alpha_{k,l,m,p}e^{-2\pi ibp\log\nu}$, $\nu\in(1,e^{1/b})$, $\mathcal{F}$ and $\mathcal{F}^{\prime\prime}$ are finite subsets of $\z^{2n+1}$ and $\z$ respectively. Define $F=\\ \sum_{(k,l,m)\in\mathcal{F},p\in\mathcal{F}^{\prime\prime}}\alpha_{k,l,m,p}T_{e^{bp}}M_{a(2k,l,m)}\mathcal{G}$. Then $F\in span\ \mathcal{U}(\mathcal{G})$ with $\|F\|=\|\rho\|$, by \eqref{16}. Since $span\ \mathcal{U}(\mathcal{G})$ and $c_{00}\big(\z^{2n+1},L^2((1,e^{1/b}),d\kappa)\big)$ are dense in $P(\mathcal{G})$ and $\ell^2\big(\z^{2n+1},L^2((1,e^{1/b}),d\kappa)\big)$ respectively, we get the required result. 
\end{proof}
\begin{theorem}
Let $\{g_{k,l,m}(\cdot,\nu e^{s/b},j):k,l\in\Z,m\in\z\}$ be an orthogonal system in $L^2(\c^n)$ for $a.e.\ \nu\in(1,e^{1/b})$ and for each $j\in\{1,-1\},s\in\z$. Define
$$w_{g_{k,l,m}}(\nu)=\sum_{j\in\{1,-1\}}\sum_{s\in\z}\|g_{k,l,m}(\cdot,\nu e^{s/b},j)\|_{L^2(\c^n)}^2,\  \nu\in(1,e^{1/b}).$$
Then the Gabor system $\mathcal{U}(\mathcal{G})$ is an orthonormal system in $L^2(\R^*,\b;d\kappa)$ iff $w_{g_{k,l,m}}(\nu)=b$ for $a.e.\ \nu\in(1,e^{1/b})$ for each $(k,l,m)\in\z^{2n+1}$.
\end{theorem}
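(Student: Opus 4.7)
The plan is to invoke Proposition \ref{c} and reduce the statement to a computation with Fourier series on $[0,1]$. By the isometry of Proposition \ref{c} and the polarization identity, the inner product in $L^2(\R^*,\b;d\kappa)$ of any two elements of $P(\mathcal{G})$ equals the inner product of the corresponding sequences in $\ell^2\big(\z^{2n+1},L^2((1,e^{1/b});d\kappa)\big)$. So it is enough to compute inner products among the images of the distinguished elements $F_{k,l,m,p}:=T_{e^{bp}}M_{a(2k,l,m)}\mathcal{G}$ under this isometry.

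For the element $F_{k,l,m,p}$, the only nonzero coefficient $\alpha$ is at $(k,l,m,p)$ with value $1$, so $\rho_{k',l',m'}(\nu)=\delta_{(k,l,m),(k',l',m')}\,e^{-2\pi ibp\log\nu}$. Hence the image of $F_{k,l,m,p}$ is the sequence whose only nonzero entry, at index $(k,l,m)$, is the function $\nu\mapsto e^{-2\pi ibp\log\nu}\,w_{g_{k,l,m}}^{1/2}(\nu)$ on $(1,e^{1/b})$. Consequently
\begin{align*}
\langle F_{k,l,m,p},F_{k',l',m',p'}\rangle
=\delta_{(k,l,m),(k',l',m')}\int_1^{e^{1/b}}e^{-2\pi ib(p-p')\log\nu}\,w_{g_{k,l,m}}(\nu)\,\frac{d\nu}{\nu}.
\end{align*}
The change of variable $u=b\log\nu$ (so $du=b\,d\nu/\nu$) turns the integral into
\begin{align*}
\frac{1}{b}\int_0^1 e^{-2\pi i(p-p')u}\,w_{g_{k,l,m}}(e^{u/b})\,du,
\end{align*}
which is $1/b$ times the $(p-p')$-th Fourier coefficient of the function $\widetilde{w}_{k,l,m}(u):=w_{g_{k,l,m}}(e^{u/b})$ on $[0,1]$.

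The system $\mathcal{U}(\mathcal{G})$ is orthonormal precisely when this quantity equals $\delta_{p,p'}$ whenever $(k,l,m)=(k',l',m')$. Setting $p=p'$ gives $\int_0^1\widetilde{w}_{k,l,m}(u)\,du=b$, while setting $p\neq p'$ forces every nonzero Fourier coefficient of $\widetilde{w}_{k,l,m}$ to vanish. By the completeness of $\{e^{-2\pi ipu}\}_{p\in\z}$ in $L^2([0,1])$, this is equivalent to $\widetilde{w}_{k,l,m}(u)=b$ a.e.\ on $[0,1]$, that is, $w_{g_{k,l,m}}(\nu)=b$ for a.e.\ $\nu\in(1,e^{1/b})$. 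The converse implication follows immediately by reading the same computation in the other direction.

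The main obstacle is conceptual rather than technical: one must recognize that Proposition \ref{c} already encodes the whole Hilbert-space structure of $P(\mathcal{G})$, so that the orthonormality statement collapses into the single condition that the Fourier series of each $\widetilde{w}_{k,l,m}$ on $[0,1]$ is identically the constant $b$. Once the inner-product formula above is written down, everything else is a standard Fourier argument.
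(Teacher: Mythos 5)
Your argument is correct, and it reaches the same key identity as the paper by a slightly different road. The paper proves the theorem by computing $\big\langle T_{e^{bp}}M_{a(2k,l,m)}\mathcal{G},T_{e^{bp^\prime}}M_{a(2k^\prime,l^\prime,m^\prime)}\mathcal{G}\big\rangle$ from scratch --- Plancherel on $L^2(\R^*,\b;d\kappa)$, the Weyl transform, discretization of $\R_+^*$, and the orthogonality hypothesis --- arriving at its displayed case formula (the integral $\int_1^{e^{1/b}}e^{2\pi ib(p^\prime-p)\log\nu}\,w_{g_{k,l,m}}(\nu)\,\frac{d\nu}{\nu}$ when the triples coincide, and $0$ otherwise), and then finishes with the Fourier-coefficient argument. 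You instead obtain exactly that formula by specializing the isometry of Proposition \ref{c} to the generators and polarizing; since the map there is linear in the coefficients $\alpha_{k,l,m,p}$ and isometric by the identity $\|F\|^2=\|\rho\|^2$, it does preserve inner products, and the disjoint supports of the image sequences give the vanishing for $(k,l,m)\neq(k^\prime,l^\prime,m^\prime)$. What your route buys is economy: the Plancherel/Weyl/discretization computation is done once (in the derivation preceding Proposition \ref{c}) rather than repeated inside the theorem. What it costs is two small points you should make explicit: (i) polarization requires the map $F\mapsto\rho$ to be \emph{linear} and well defined on $\mathrm{span}\,\mathcal{U}(\mathcal{G})$, which holds because $\rho$ depends linearly on the coefficients and any two coefficient representations of the same $F$ yield $\rho$'s differing by a null sequence, but Proposition \ref{c} as stated only announces an isometric isomorphism; (ii) a priori $w_{g_{k,l,m}}\circ\exp(\cdot/b)$ is only known to lie in $L^1(0,1)$ (its integral is $\|\mathcal{G}_{k,l,m}\|^2$), so the final step should appeal to uniqueness of Fourier coefficients for $L^1$ functions rather than to completeness of the exponentials in $L^2([0,1])$; the conclusion is unaffected. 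Neither point is a gap, and the two directions of the equivalence come out of your single computation just as they do in the paper.
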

\begin{proof}
Assume that $w_{g_{k,l,m}}(\nu)=b$ for $a.e.\ \nu\in(1,e^{1/b})$ for each $(k,l,m)\in\z^{2n+1}$. Then using Plancherel formula for $L^2(\R^*,\b;d\kappa)$, we get
\begin{align*}
&\big\langle T_{e^{bp}}M_{a(2k,l,m)}\mathcal{G},T_{e^{bp^\prime}}M_{a(2k^\prime,l^\prime,m^\prime)}\mathcal{G}\big\rangle_{L^2(\R^*,\b;d\kappa)}\\
&=\big\langle T_{e^{bp}}\mathcal{G}_{k,l,m},T_{e^{bp^\prime}}\mathcal{G}_{k^\prime,l^\prime,m^\prime}\big\rangle_{L^2(\R^*,\b;d\kappa)}\\
&=\big\langle (T_{e^{bp}}\mathcal{G}_{k,l,m})^{\widehat{}}\ ,(T_{e^{bp^\prime}}\mathcal{G}_{k^\prime,l^\prime,m^\prime})^{\widehat{}}\ \big\rangle_{L^2(\R^*,\b;d\kappa)}\\
&=\sum_{j\in\{1,-1\}}\int_{\R_+^*}\big\langle (T_{e^{bp}}\mathcal{G}_{k,l,m})^{\widehat{}}\ (\nu,j),(T_{e^{bp^\prime}}\mathcal{G}_{k^\prime,l^\prime,m^\prime})^{\widehat{}}\ (\nu,j)\big\rangle_\b\ \frac{d\nu}{\nu}\\
&=\sum_{j\in\{1,-1\}}\int_{\R_+^*}\big\langle\widehat{\mathcal{G}}_{k,l,m}(\nu,j),\widehat{\mathcal{G}}_{k^\prime,l^\prime,m^\prime}(\nu,j)\big\rangle_\b e^{2\pi ib(p^\prime-p)\log\nu}\ \frac{d\nu}{\nu}\\
&=\sum_{j\in\{1,-1\}}\sum_{s\in\z}\int_{e^{s/b}}^{e^{(s+1)/b}}\big\langle\widehat{\mathcal{G}}_{k,l,m}(\nu,j),\widehat{\mathcal{G}}_{k^\prime,l^\prime,m^\prime}(\nu,j)\big\rangle_\b e^{2\pi ib(p^\prime-p)\log\nu}\ \frac{d\nu}{\nu}\\
&=\sum_{j\in\{1,-1\}}\sum_{s\in\z}\int_{e^{s/b}}^{e^{(s+1)/b}}\big\langle g_{k,l,m}(\cdot,\nu,j),g_{k^\prime,l^\prime,m^\prime}(\cdot,\nu,j)\big\rangle_{L^2(\c^n)} e^{2\pi ib(p^\prime-p)\log\nu}\ \frac{d\nu}{\nu}.
\end{align*}
By applying change of variables and then by orthogonality condition, we get
\begin{align*}
&\big\langle T_{e^{bp}}M_{a(2k,l,m)}\mathcal{G},T_{e^{bp^\prime}}M_{a(2k^\prime,l^\prime,m^\prime)}\mathcal{G}\big\rangle_{L^2(\R^*,\b;d\kappa)}\\
&=\sum_{j\in\{1,-1\}}\sum_{s\in\z}\int_1^{e^{1/b}}\big\langle g_{k,l,m}(\cdot,\nu e^{s/b},j),g_{k^\prime,l^\prime,m^\prime}(\cdot,\nu e^{s/b},j)\big\rangle_{L^2(\c^n)} e^{2\pi ib(p^\prime-p)\log(\nu e^{s/b})}\ \frac{d\nu}{\nu}\\
&=\begin{cases}
0, & (k,l,m)\neq(k^\prime,l^\prime,m^\prime) \\
\sum_{j\in\{1,-1\}}\sum_{s\in\z}\int_1^{e^{1/b}}\|g_{k,l,m}(\cdot,\nu e^{s/b},j)\|_{L^2(\c^n)}^2e^{2\pi ib(p^\prime-p)\log\nu}\ \frac{d\nu}{\nu}, & (k,l,m)=(k^\prime,l^\prime,m^\prime)
\end{cases}\\
&=\begin{cases}
0, & (k,l,m)\neq(k^\prime,l^\prime,m^\prime) \\
\int_1^{e^{1/b}}e^{2\pi ib(p^\prime-p)\log\nu} w_{g_{k,l,m}}(\nu)\ \frac{d\nu}{\nu}, & (k,l,m)=(k^\prime,l^\prime,m^\prime).
\end{cases}\numberthis\label{19}
\end{align*}
Thus by the hypothesis we get
\begin{align*}
\big\langle T_{e^{bp}}M_{a(2k,l,m)}\mathcal{G},T_{e^{bp^\prime}}M_{a(2k,l,m)}\mathcal{G}\big\rangle_{L^2(\R^*,\b;d\kappa)}&=b\int_1^{e^{1/b}}e^{2\pi ib(p^\prime-p)\log\nu}\ \frac{d\nu}{\nu}\\
&=b\cdot\frac{1}{b}\delta_{p,p^\prime}\\
&=\delta_{p,p^\prime}\ ,
\end{align*}
proving $\mathcal{U}(\mathcal{G})$ is an orthonormal system.\\

Conversely suppose $\mathcal{U}(\mathcal{G})$ is an orthonormal system in $L^2(\R^*,\b;d\kappa)$. Then from \eqref{19} we have
\begin{align*}
\big\langle M_{a(2k,l,m)}\mathcal{G},T_{e^{bp}}M_{a(2k,l,m)}\mathcal{G}\big\rangle_{L^2(\R^*,\b;d\kappa)}&=\int_1^{e^{1/b}}e^{2\pi ibp\log\nu} w_{g_{k,l,m}}(\nu)\ \frac{d\nu}{\nu}\\
&=\int_0^{1/b}w_{g_{k,l,m}}(e^\nu)e^{2\pi ibp\nu}\ d\nu\\
&=\frac{1}{b}\widehat{w}_{g_{k,l,m}}(e^{-p})\ ,
\end{align*}
by applying change of variables. Hence by hypothesis, we get $\widehat{w}_{g_{k,l,m}}(e^{-p})=b\delta_{p,0}$. By expanding $w_{g_{k,l,m}}$ in the Fourier series, we get
\begin{align*}
w_{g_{k,l,m}}(e^{\nu})=\sum_{p\in\z}\widehat{w}_{g_{k,l,m}}(e^p)e^{2\pi ibp\nu}=\sum_{p\in\z}b\delta_{-p,0}e^{2\pi ibp\nu}=b\ \mathrm{for}\ a.e.\ \nu\in(0,1/b),
\end{align*}
from which it follows that $w_{g_{k,l,m}}(\nu)=b$ for $a.e.\ \nu\in(1,e^{1/b})$.
\end{proof}
\begin{theorem}\label{a}
Let $\{g_{k,l,m}(\cdot,\nu e^{s/b},j):k,l\in\Z,m\in\z\}$ be an orthogonal system in $L^2(\c^n)$ for $a.e.\ \nu\in(1,e^{1/b})$ and for each $j\in\{1,-1\},s\in\z$ and define $\Omega_{k,l,m}=\{\nu\in(1,e^{1/b}):w_{g_{k,l,m}}(\nu)\neq 0\}$ for each $(k,l,m)\in\z^{2n+1}$, where
$$w_{g_{k,l,m}}(\nu)=\sum_{j\in\{1,-1\}}\sum_{s\in\z}\|g_{k,l,m}(\cdot,\nu e^{s/b},j)\|_{L^2(\c^n)}^2,\  \nu\in(1,e^{1/b}).$$
Then the collection $\mathcal{U}(\mathcal{G})$ is a Parseval frame sequence iff $w_{g_{k,l,m}}(\nu)=b$ for $a.e.\ \nu\in\Omega_{k,l,m}$ for each $(k,l,m)\in\z^{2n+1}$.
\end{theorem}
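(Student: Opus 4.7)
The plan is to mimic the Bessel sequence computation above, but to track equalities throughout and exploit the orthogonality hypothesis to collapse the critical double sum. For $F \in \mathrm{span}\ \mathcal{U}(\mathcal{G})$, one first expands $\langle F, T_{e^{bp}}M_{a(2k,l,m)}\mathcal{G}\rangle$ via the Plancherel identity \eqref{25}, the Weyl transform, the discretization $\R_+^* = \bigsqcup_{s\in\z} (e^{s/b}, e^{(s+1)/b})$, and then the Plancherel formula on $L^2(0, 1/b)$, obtaining
\[
\sum_{p\in\z}|\langle F, T_{e^{bp}}M_{a(2k,l,m)}\mathcal{G}\rangle|^2 \;=\; \frac{1}{b}\int_1^{e^{1/b}}|\Lambda_{k,l,m}(\nu)|^2\,\frac{d\nu}{\nu},
\]
where $\Lambda_{k,l,m}(\nu):=\sum_{j\in\{1,-1\}}\sum_{s\in\z}\langle f(\cdot,\nu e^{s/b},j), g_{k,l,m}(\cdot,\nu e^{s/b},j)\rangle_{L^2(\c^n)}$ and $f(\cdot,\nu,j)$ denotes the Weyl pre-image of $\widehat{F}(\nu,j)$.

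Next, from $\widehat{F}(\nu,j) = \sum_{(k',l',m')} \rho_{k',l',m'}(\nu)\widehat{\mathcal{G}}_{k',l',m'}(\nu,j)$ one reads $f(\cdot,\nu,j) = \sum_{(k',l',m')} \rho_{k',l',m'}(\nu) g_{k',l',m'}(\cdot,\nu,j)$; combined with the periodicity $\rho_{k',l',m'}(\nu e^{s/b}) = \rho_{k',l',m'}(\nu)$ and the orthogonality of $\{g_{k,l,m}(\cdot,\nu e^{s/b},j)\}_{k,l,m}$ in $L^2(\c^n)$, every inner product in $\Lambda_{k,l,m}(\nu)$ collapses to its diagonal term, yielding $\Lambda_{k,l,m}(\nu) = \rho_{k,l,m}(\nu)\, w_{g_{k,l,m}}(\nu)$. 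Substituting and comparing with the norm identity \eqref{16} for $\|F\|^2$ gives
\[
\sum_{k,l,m,p}|\langle F,T_{e^{bp}}M_{a(2k,l,m)}\mathcal{G}\rangle|^2 - \|F\|^2 \;=\; \sum_{k,l,m}\int_1^{e^{1/b}}|\rho_{k,l,m}(\nu)|^2\, w_{g_{k,l,m}}(\nu)\,\Bigl(\tfrac{w_{g_{k,l,m}}(\nu)}{b} - 1\Bigr)\frac{d\nu}{\nu}.
\]

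The sufficiency is then immediate: if $w_{g_{k,l,m}} = b$ on $\Omega_{k,l,m}$, the parenthetical factor vanishes on $\Omega_{k,l,m}$ while $w_{g_{k,l,m}}$ itself vanishes on its complement, so the right side is zero on the dense subspace $\mathrm{span}\ \mathcal{U}(\mathcal{G})$ and extends to $P(\mathcal{G})$ by continuity. For the converse, the surjectivity in Proposition~\ref{c} lets one prescribe $\rho_{k_0,l_0,m_0}w_{g_{k_0,l_0,m_0}}^{1/2}$ as the indicator of any measurable $E \subset \Omega_{k_0,l_0,m_0}$ on which $w_{g_{k_0,l_0,m_0}} \neq b$ (with all other $\rho$'s zero); the Parseval assumption then forces $\int_E w_{g_{k_0,l_0,m_0}}(\nu)\bigl(\tfrac{w_{g_{k_0,l_0,m_0}}(\nu)}{b} - 1\bigr)\tfrac{d\nu}{\nu}=0$, so $E$ has measure zero. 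The main obstacle is establishing the diagonal collapse $\Lambda_{k,l,m}(\nu) = \rho_{k,l,m}(\nu)w_{g_{k,l,m}}(\nu)$ rigorously: one must justify the interchange of the sums over $(k',l',m')$, $j$, and $s$ with the inner product via Fubini and the $\ell^2$-structure, and then pass the resulting identity from the dense subspace to all of $P(\mathcal{G})$ through the isometric isomorphism of Proposition~\ref{c}.
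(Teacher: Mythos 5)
Your proposal follows essentially the same route as the paper's proof: both reduce the problem to the identity $\sum_{p\in\z}|\langle F,T_{e^{bp}}M_{a(2k,l,m)}\mathcal{G}\rangle|^2=\frac{1}{b}\int_1^{e^{1/b}}|\rho_{k,l,m}(\nu)|^2w_{g_{k,l,m}}^2(\nu)\,\frac{d\nu}{\nu}$ via the orthogonality-induced diagonal collapse, compare it with \eqref{16}, and prove necessity by testing against indicator-type sequences supplied by the surjectivity of the parametrization in Proposition~\ref{c}. The one step to tighten is your final implication: for an arbitrary measurable $E\subset\Omega_{k_0,l_0,m_0}$ on which $w_{g_{k_0,l_0,m_0}}\neq b$, the vanishing of $\int_E w_{g_{k_0,l_0,m_0}}(\nu)\bigl(\tfrac{w_{g_{k_0,l_0,m_0}}(\nu)}{b}-1\bigr)\frac{d\nu}{\nu}$ does not by itself force $|E|=0$ because the integrand can change sign, so you must apply the argument separately to $\{w_{g_{k_0,l_0,m_0}}<b\}\cap\Omega_{k_0,l_0,m_0}$ and $\{w_{g_{k_0,l_0,m_0}}>b\}\cap\Omega_{k_0,l_0,m_0}$, where the integrand has a strict sign --- exactly the splitting into $\Omega_{k,l,m}^{(1)}$ and $\Omega_{k,l,m}^{(2)}$ carried out in the paper.
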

\begin{proof}
Let $\mathcal{U}(\mathcal{G})$ be a Parseval frame sequence. Let $F\in span\ \mathcal{U}(\mathcal{G})$, with 
\begin{align*}
F=\sum_{(k,l,m,p)\in\mathcal{F}}\alpha_{k,l,m,p}T_{e^{bp}}M_{a(2k,l,m)}\mathcal{G}\ ,
\end{align*}
for some finite subset $\mathcal{F}$ of $\z^{2n+2}$. Then as in the discussion before Proposition \ref{c} we can get \eqref{16} and \eqref{17} with
\begin{align*}
\rho_{k,l,m}(\nu)=\sum_{p\in\mathcal{F}^{\prime\prime}}\alpha_{k,l,m,p}e^{-2\pi ibp\log\nu}.
\end{align*}
Now using Plancherel formula for $L^2(\R^*,\b;d\kappa)$, we get
\begin{align*}
&\big\langle F,T_{e^{bp}}M_{a(2k,l,m)}\mathcal{G}\big\rangle_{L^2(\R^*,\b;d\kappa)}\\
&=\big\langle F,T_{e^{bp}}\mathcal{G}_{k,l,m}\big\rangle_{L^2(\R^*,\b;d\kappa)}\\
&=\bigg\langle\sum_{(k^\prime,l^\prime,m^\prime,p^\prime)\in\mathcal{F}}\alpha_{k^\prime,l^\prime,m^\prime,p^\prime}T_{e^{bp^\prime}}\mathcal{G}_{k^\prime,l^\prime,m^\prime},T_{e^{bp}}\mathcal{G}_{k,l,m}\bigg\rangle_{L^2(\R^*,\b;d\kappa)}\\
&=\sum_{\mathcal{F}}\alpha_{k^\prime,l^\prime,m^\prime,p^\prime}\big\langle(T_{e^{bp^\prime}}\mathcal{G}_{k^\prime,l^\prime,m^\prime})^{\widehat{}}\ ,(T_{e^{bp}}\mathcal{G}_{k,l,m})^{\widehat{}}\ \big\rangle_{L^2(\R^*,\b;d\kappa)}\\
&=\sum_{\mathcal{F}}\alpha_{k^\prime,l^\prime,m^\prime,p^\prime}\sum_{j\in\{1,-1\}}\int_{\R_+^*}\big\langle(T_{e^{bp^\prime}}\mathcal{G}_{k^\prime,l^\prime,m^\prime})^{\widehat{}}\ (\nu,j),(T_{e^{bp}}\mathcal{G}_{k,l,m})^{\widehat{}}\ (\nu,j)\big\rangle_\b\ \frac{d\nu}{\nu}\\
&=\sum_{(k^\prime,l^\prime,m^\prime)\in\mathcal{F}^\prime}\sum_{j\in\{1,-1\}}\sum_{s\in\z}\int_{e^{s/b}}^{e^{(s+1)/b}}\rho_{k^\prime,l^\prime,m^\prime}(\nu)\big\langle\widehat{\mathcal{G}}_{k^\prime,l^\prime,m^\prime}(\nu,j),\widehat{\mathcal{G}}_{k,l,m}(\nu,j)\big\rangle_\b e^{2\pi ibp\log\nu}\ \frac{d\nu}{\nu}\\
&=\sum_{(k^\prime,l^\prime,m^\prime)\in\mathcal{F}^\prime}\sum_{j\in\{1,-1\}}\sum_{s\in\z}\int_1^{e^{1/b}}\rho_{k^\prime,l^\prime,m^\prime}(\nu e^{s/b})\big\langle\widehat{\mathcal{G}}_{k^\prime,l^\prime,m^\prime}(\nu e^{s/b},j),\widehat{\mathcal{G}}_{k,l,m}(\nu e^{s/b},j)\big\rangle_\b e^{2\pi ibp\log(\nu e^{s/b})}\ \frac{d\nu}{\nu}\ ,
\end{align*}
by applying change of variables. Hence using the Weyl transform, we get
\begin{align*}
&\big\langle F,T_{e^{bp}}M_{a(2k,l,m)}\mathcal{G}\big\rangle_{L^2(\R^*,\b,d\kappa)}\\
&=\sum_{\mathcal{F}^\prime}\sum_{j\in\{1,-1\}}\sum_{s\in\z}\int_1^{e^{1/b}}\hspace{-6 mm}\rho_{k^\prime,l^\prime,m^\prime}(\nu e^{s/b})\big\langle W(g_{k^\prime,l^\prime,m^\prime}(\cdot,\nu e^{s/b},j)),W(g_{k,l,m}(\cdot,\nu e^{s/b},j))\big\rangle_\b e^{2\pi ibp\log(\nu e^{s/b})}\ \frac{d\nu}{\nu}\\
&=\sum_{\mathcal{F}^\prime}\sum_{j\in\{1,-1\}}\sum_{s\in\z}\int_1^{e^{1/b}}\rho_{k^\prime,l^\prime,m^\prime}(\nu)\big\langle g_{k^\prime,l^\prime,m^\prime}(\cdot,\nu e^{s/b},j),g_{k,l,m}(\cdot,\nu e^{s/b},j)\big\rangle_{L^2(\c^n)}e^{2\pi ibp\log\nu}\ \frac{d\nu}{\nu}\\
&=\int_1^{e^{1/b}}\rho_{k,l,m}(\nu)\ w_{g_{k,l,m}}(\nu)e^{2\pi ibp\log\nu}\ \frac{d\nu}{\nu}\ ,
\end{align*}
where we have made use of the orthogonality condition. Thus
\begin{align*}
\big\langle F,T_{e^{bp}}M_{a(2k,l,m)}\mathcal{G}\big\rangle_{L^2(\R^*,\b;d\kappa)}&=\int_1^{e^{1/b}}\Lambda_{k,l,m}(\nu)e^{2\pi ibp\log\nu}\ \frac{d\nu}{\nu}\\
&=\frac{1}{b}\widehat{\Lambda}_{k,l,m}(e^{-p})\ ,
\end{align*}
where $\Lambda_{k,l,m}(\nu)=\rho_{k,l,m}(\nu)\ w_{g_{k,l,m}}(\nu)$. Hence by using Plancherel formula for $L^2((1,e^{1/b});d\kappa)$,
\begin{align*}
\sum_{p\in\z}\big|\big\langle F,T_{e^{bp}}M_{a(2k,l,m)}\mathcal{G}\big\rangle\big|^2&=\frac{1}{b^2}\sum_{p\in\z}\big|\widehat{\Lambda}_{k,l,m}(e^{-p})\big|^2\\
&=\frac{1}{b}\int_1^{e^{1/b}}\big|\Lambda_{k,l,m}(\nu)\big|^2\ \frac{d\nu}{\nu}\\
&=\frac{1}{b}\int_1^{e^{1/b}}\big|\rho_{k,l,m}(\nu)\big|^2w_{g_{k,l,m}}^2(\nu)\ \frac{d\nu}{\nu}.\numberthis\label{20}
\end{align*}
By our assumption,
\begin{align*}
\sum_{k,l\in\Z,m,p\in\z}\big|\big\langle F,T_{e^{bp}}M_{a(2k,l,m)}\mathcal{G}\big\rangle\big|^2=\|F\|_{L^2(\R^*,\b;d\kappa)}^2.
\end{align*}
Now using \eqref{16} and \eqref{20} in the above equation, we get
\begin{align*}
\frac{1}{b}\sum_{k,l\in\Z,m\in\z}\int_1^{e^{1/b}}\big|\rho_{k,l,m}(\nu)\big|^2w_{g_{k,l,m}}^2(\nu)\ \frac{d\nu}{\nu}=\sum_{k,l\in\Z,m\in\z}\int_1^{e^{1/b}}\big|\rho_{k,l,m}(\nu)\big|^2w_{g_{k,l,m}}(\nu)\ \frac{d\nu}{\nu}.
\end{align*}
This leads to
\begin{align*}
\sum_{k,l\in\Z,m\in\z}\int_{\Omega_{k,l,m}}\big|\rho_{k,l,m}(\nu)\big|^2\big(w_{g_{k,l,m}}(\nu)-b\big)w_{g_{k,l,m}}(\nu)\ \frac{d\nu}{\nu}=0.\numberthis\label{21}
\end{align*}
Consider $\Omega_{k,l,m}^{(1)}=\{\nu\in\Omega_{k,l,m}:w_{g_{k,l,m}}(\nu)\leq b\}$ and $\Omega_{k,l,m}^{(2)}=\{\nu\in\Omega_{k,l,m}:w_{g_{k,l,m}}(\nu)> b\}$ for each $(k,l,m)\in\z^{2n+1}$. Define $\rho^{(1)}=\big\{\rho_{k,l,m}^{(1)}w_{g_{k,l,m}}^{1/2}\big\}_{(k,l,m)\in\z^{2n+1}}$ by
\[\rho_{k,l,m}^{(1)}(\nu)=
\begin{cases}
\frac{1}{\beta_{k,l,m}\|\mathcal{G}_{k,l,m}\|_{L^2(\R^*,\b;d\kappa)}}, & \nu\in\Omega_{k,l,m}^{(1)}\\
0, & otherwise\ ,
\end{cases}
\]
where $\sum_{(k,l,m)\in\z^{2n+1}}\frac{1}{\beta_{k,l,m}^2}<\infty$. We aim to show that $\rho^{(1)}\in\ell^2\big(\z^{2n+1},L^2((1,e^{1/b});d\kappa)\big)$. Consider
\begin{align*}
\big\|\rho^{(1)}\big\|_{\ell^2\big(\z^{2n+1},L^2((1,e^{1/b});d\kappa)\big)}^2&=\sum_{(k,l,m)\in\z^{2n+1}}\big\|\rho_{k,l,m}^{(1)}w_{g_{k,l,m}}^{1/2}\big\|_{L^2((1,e^{1/b});d\kappa)}^2\\
&=\sum_{(k,l,m)\in\z^{2n+1}}\int_1^{e^{1/b}}\big|\rho_{k,l,m}^{(1)}(\nu)\big|^2w_{g_{k,l,m}}(\nu)\ \frac{d\nu}{\nu}\\
&=\sum_{(k,l,m)\in\z^{2n+1}}\frac{1}{\beta_{k,l,m}^2\|\mathcal{G}_{k,l,m}\|_{L^2(\R^*,\b;d\kappa)}^2}\int_{\Omega_{k,l,m}^{(1)}}w_{g_{k,l,m}}(\nu)\ \frac{d\nu}{\nu}\\
&\leq\sum_{(k,l,m)\in\z^{2n+1}}\frac{1}{\beta_{k,l,m}^2\|\mathcal{G}_{k,l,m}\|_{L^2(\R^*,\b;d\kappa)}^2}\int_1^{e^{1/b}}w_{g_{k,l,m}}(\nu)\ \frac{d\nu}{\nu}\numberthis\label{22}
\end{align*}
But
\begin{align*}
\int_1^{e^{1/b}}w_{g_{k,l,m}}(\nu)\ \frac{d\nu}{\nu}&=\int_1^{e^{1/b}}\sum_{j\in\{1,-1\}}\sum_{s\in\z}\|g_{k,l,m}(\cdot,\nu e^{s/b},j)\|_{L^2(\c^n)}^2\ \frac{d\nu}{\nu}\\
&=\sum_{j\in\{1,-1\}}\int_{\R_+^*}\|g_{k,l,m}(\cdot,\nu,j)\|_{L^2(\c^n)}^2\ \frac{d\nu}{\nu}\\
&=\sum_{j\in\{1,-1\}}\int_{\R_+^*}\|W(g_{k,l,m}(\cdot,\nu,j))\|_\b^2\ \frac{d\nu}{\nu}\\
&=\sum_{j\in\{1,-1\}}\int_{\R_+^*}\|\widehat{\mathcal{G}}_{k,l,m}(\nu,j)\|_\b^2\ \frac{d\nu}{\nu}\\
&=\big\|\mathcal{G}_{k,l,m}\big\|_{L^2(\R^*,\b,d\kappa)}^2\ ,
\end{align*}
by applying change of variables and Placherel formula for the Weyl transform. Hence it follows from \eqref{22} that
\begin{align*}
\big\|\rho^{(1)}\big\|^2&\leq\sum_{(k,l,m)\in\z^{2n+1}}\frac{1}{\beta_{k,l,m}^2\|\mathcal{G}_{k,l,m}\|_{L^2(\R^*,\b;d\kappa)}^2}\big\|\mathcal{G}_{k,l,m}\big\|_{L^2(\R^*,\b;d\kappa)}^2=\sum_{(k,l,m)\in\z^{2n+1}}\frac{1}{\beta_{k,l,m}^2}<\infty.
\end{align*}
Thus \eqref{21} is true for $\rho=\rho^{(1)}$. Substituting $\rho=\rho^{(1)}$ in \eqref{21}, we get
\begin{align*}
\sum_{k,l\in\Z,m\in\z}\int_{\Omega_{k,l,m}^{(1)}}\frac{1}{\beta_{k,l,m}^2\|\mathcal{G}_{k,l,m}\|_{L^2(\R^*,\b;d\kappa)}^2}\big(b-w_{g_{k,l,m}}(\nu)\big)w_{g_{k,l,m}}(\nu)\ \frac{d\nu}{\nu}=0
\end{align*}
Hence $\forall\ (k,l,m)\in\z^{2n+1}$,
\begin{align*}
\int_{\Omega_{k,l,m}^{(1)}}\big(b-w_{g_{k,l,m}}(\nu)\big)w_{g_{k,l,m}}(\nu)\ \frac{d\nu}{\nu}=0.
\end{align*}
But on $\Omega_{k,l,m}^{(1)}$ we have $b-w_{g_{k,l,m}}(\nu)\geq 0$ and $w_{g_{k,l,m}}(\nu)>0$. Hence $w_{g_{k,l,m}}(\nu)=b$ for $a.e.\ \nu\in\Omega_{k,l,m}^{(1)}$. Similarly considering $\rho^{(2)}=\big\{\rho_{k,l,m}^{(2)}w_{g_{k,l,m}}^{1/2}\big\}_{(k,l,m)\in\z^{2n+1}}$, where
\[\rho_{k,l,m}^{(2)}(\nu)=
\begin{cases}
\frac{1}{\beta_{k,l,m}\|\mathcal{G}_{k,l,m}\|_{L^2(\R^*,\b;d\kappa)}}, & \nu\in\Omega_{k,l,m}^{(2)}\\
0, & otherwise\ ,
\end{cases}
\]
we can show that $w_{g_{k,l,m}}(\nu)=b$ for $a.e.\ \nu\in\Omega_{k,l,m}^{(2)}$. Therefore $w_{g_{k,l,m}}(\nu)=b$ for $a.e.\ \nu\in\Omega_{k,l,m}\ \mathrm{and}\ \forall\ (k,l,m)\in\z^{2n+1}$.\\

Conversely let $w_{g_{k,l,m}}(\nu)=b$ for $a.e.\ \nu\in\Omega_{k,l,m}$ and for each $(k,l,m)\in\z^{2n+1}$. Then for $F\in span\ \mathcal{U}(G)$ from \eqref{20} and \eqref{16} we have 
\begin{align*}
\sum_{(k,l,m,p)\in\mathcal{F}}\big|\big\langle F,T_{e^{bp}}M_{a(2k,l,m)}\mathcal{G}\big\rangle\big|^2&=\frac{1}{b}\sum_{(k,l,m)\in\mathcal{F}^\prime}\int_1^{e^{1/b}}\big|\rho_{k,l,m}(\nu)\big|^2w_{g_{k,l,m}}^2(\nu)\ \frac{d\nu}{\nu}\\
&=b\sum_{(k,l,m)\in\mathcal{F}^\prime}\int_{\Omega_{k,l,m}}\big|\rho_{k,l,m}(\nu)\big|^2\ \frac{d\nu}{\nu}\\
&=\sum_{(k,l,m)\in\mathcal{F}^\prime}\int_{\Omega_{k,l,m}}\big|\rho_{k,l,m}(\nu)\big|^2w_{g_{k,l,m}}(\nu)\ \frac{d\nu}{\nu}\\
&=\sum_{(k,l,m)\in\mathcal{F}^\prime}\int_1^{e^{1/b}}\big|\rho_{k,l,m}(\nu)\big|^2w_{g_{k,l,m}}(\nu)\ \frac{d\nu}{\nu}\\
&=\|F\|_{L^2(\R^*,\b;d\kappa)}^2\ ,
\end{align*}
proving that $\mathcal{U}(\mathcal{G})$ is a Parseval frame sequence by density argument.      
\end{proof}
\begin{theorem}
Let $\{g_{k,l,m}(\cdot,\nu e^{s/b},j):k,l\in\Z,m\in\z\}$ be an orthogonal system in $L^2(\c^n)$ for $a.e.\ \nu\in(1,e^{1/b})$ and for each $j\in\{1,-1\},s\in\z$. Define 
$$w_{g_{k,l,m}}(\nu)=\sum_{j\in\{1,-1\}}\sum_{s\in\z}\|g_{k,l,m}(\cdot,\nu e^{s/b},j)\|_{L^2(\c^n)}^2,\  \nu\in(1,e^{1/b}).$$ 
Then the family $\mathcal{U}(\mathcal{G})$ is a frame sequence with bounds $A,B>0$ iff $Ab\leq w_{g_{k,l,m}}(\nu)\leq Bb$ for $a.e.\ \nu\in\Omega_{k,l,m},\ \forall\ (k,l,m)\in\z^{2n+1}$. 
\end{theorem}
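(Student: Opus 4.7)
The plan is to reuse the machinery developed for Theorem \ref{a}. The starting point is that for any $F\in\mathrm{span}\,\mathcal{U}(\mathcal{G})$ written as $F=\sum_{(k,l,m,p)\in\mathcal{F}}\alpha_{k,l,m,p}T_{e^{bp}}M_{a(2k,l,m)}\mathcal{G}$, we have the two identities derived in the proof of Theorem \ref{a}, namely
\begin{equation*}
\|F\|_{L^2(\R^*,\b;d\kappa)}^2=\sum_{(k,l,m)\in\mathcal{F}^\prime}\int_1^{e^{1/b}}|\rho_{k,l,m}(\nu)|^2\,w_{g_{k,l,m}}(\nu)\,\frac{d\nu}{\nu}
\end{equation*}
from \eqref{16}, and
\begin{equation*}
\sum_{p\in\z}\bigl|\bigl\langle F,T_{e^{bp}}M_{a(2k,l,m)}\mathcal{G}\bigr\rangle\bigr|^2=\frac{1}{b}\int_1^{e^{1/b}}|\rho_{k,l,m}(\nu)|^2\,w_{g_{k,l,m}}^2(\nu)\,\frac{d\nu}{\nu}
\end{equation*}
from \eqref{20}, with $\rho_{k,l,m}(\nu)=\sum_{p\in\mathcal{F}^{\prime\prime}}\alpha_{k,l,m,p}e^{-2\pi ibp\log\nu}$. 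Consequently, the frame inequalities for $\mathcal{U}(\mathcal{G})$ on $P(\mathcal{G})$ are equivalent to
\begin{equation*}
A\sum_{k,l,m}\int_1^{e^{1/b}}|\rho_{k,l,m}(\nu)|^2 w_{g_{k,l,m}}(\nu)\frac{d\nu}{\nu}\ \leq\ \frac{1}{b}\sum_{k,l,m}\int_1^{e^{1/b}}|\rho_{k,l,m}(\nu)|^2 w_{g_{k,l,m}}^2(\nu)\frac{d\nu}{\nu}\ \leq\ B\sum_{k,l,m}\int_1^{e^{1/b}}|\rho_{k,l,m}(\nu)|^2 w_{g_{k,l,m}}(\nu)\frac{d\nu}{\nu}
\end{equation*}
holding for all admissible $\rho$.

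For sufficiency, assume $Ab\leq w_{g_{k,l,m}}(\nu)\leq Bb$ for a.e. $\nu\in\Omega_{k,l,m}$. Since $w_{g_{k,l,m}}$ vanishes on the complement of $\Omega_{k,l,m}$, multiplying the pointwise inequality $Ab\,w_{g_{k,l,m}}(\nu)\leq w_{g_{k,l,m}}^2(\nu)\leq Bb\,w_{g_{k,l,m}}(\nu)$ by $|\rho_{k,l,m}(\nu)|^2/b$, integrating over $(1,e^{1/b})$ with the measure $d\nu/\nu$, summing over $(k,l,m)$ and using the two identities above directly yields the frame inequalities on $\mathrm{span}\,\mathcal{U}(\mathcal{G})$. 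A density argument via Proposition \ref{c} extends the inequalities to all of $P(\mathcal{G})$.

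For necessity, I would argue by contradiction in the spirit of the Parseval case. Suppose there exists $(k_0,l_0,m_0)$ for which the set $\Omega_{k_0,l_0,m_0}^{(A)}=\{\nu\in\Omega_{k_0,l_0,m_0}:w_{g_{k_0,l_0,m_0}}(\nu)<Ab\}$ has positive measure. Construct a test sequence $\rho^{(A)}=\{\rho_{k,l,m}^{(A)}w_{g_{k,l,m}}^{1/2}\}$ in $\ell^2\bigl(\z^{2n+1},L^2((1,e^{1/b});d\kappa)\bigr)$ by taking $\rho_{k_0,l_0,m_0}^{(A)}=\chi_{\Omega_{k_0,l_0,m_0}^{(A)}}/\|\mathcal{G}_{k_0,l_0,m_0}\|_{L^2(\R^*,\b;d\kappa)}$ (and $0$ elsewhere), normalized by a constant $\beta_{k_0,l_0,m_0}$ if needed to ensure $\ell^2$ membership, exactly as in Theorem \ref{a}. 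Feeding the corresponding $F\in P(\mathcal{G})$ into the lower frame inequality gives
\begin{equation*}
\int_{\Omega_{k_0,l_0,m_0}^{(A)}}|\rho_{k_0,l_0,m_0}^{(A)}(\nu)|^2\bigl(w_{g_{k_0,l_0,m_0}}(\nu)-Ab\bigr)w_{g_{k_0,l_0,m_0}}(\nu)\,\frac{d\nu}{\nu}\ \geq\ 0,
\end{equation*}
but the integrand is strictly negative on $\Omega_{k_0,l_0,m_0}^{(A)}$, contradicting positive measure. A symmetric argument with $\Omega_{k_0,l_0,m_0}^{(B)}=\{\nu:w_{g_{k_0,l_0,m_0}}(\nu)>Bb\}$ and the upper frame inequality establishes $w_{g_{k,l,m}}(\nu)\leq Bb$ a.e. on $\Omega_{k,l,m}$.

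The principal obstacle is verifying that the test sequences $\rho^{(A)}$ and $\rho^{(B)}$ indeed come from elements of $P(\mathcal{G})$ (rather than from arbitrary $\ell^2$ sequences of coefficients of trigonometric polynomials), but Proposition \ref{c} identifies $P(\mathcal{G})$ isometrically with $\ell^2\bigl(\z^{2n+1},L^2((1,e^{1/b});d\kappa)\bigr)$, so every such choice of $\rho$ corresponds to some $F\in P(\mathcal{G})$, and the frame bounds must hold there. The bookkeeping with the weights $w_{g_{k,l,m}}^{1/2}$ in the definition of $\rho$ and the normalization $\beta_{k,l,m}$ needed for $\ell^2$ summability is the same as in Theorem \ref{a} and presents no new difficulty.
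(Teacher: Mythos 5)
Your proposal is correct and follows essentially the same route as the paper: both reduce the frame inequalities via the identities \eqref{16} and \eqref{20} to weighted integral inequalities in $\rho$, prove sufficiency by integrating the pointwise bounds $Ab\,w\le w^2\le Bb\,w$ on $\Omega_{k,l,m}$, and prove necessity by plugging in test sequences supported on the exceptional sets $\{w<Ab\}$ and $\{w>Bb\}$, justified through Proposition \ref{c}. The only cosmetic differences are that you phrase necessity as a contradiction and use a test sequence concentrated at a single index $(k_0,l_0,m_0)$, whereas the paper loads all indices simultaneously with weights $1/\beta_{k,l,m}$ and then extracts each integral separately.
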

\begin{proof}
Let $\mathcal{U}(\mathcal{G})$ be a frame sequence with bounds $A,B>0$. Then
\begin{align*}
A\big\|F\big\|_{L^2(\R^*,\b;d\kappa)}^2\leq\sum_{(k,l,m,p)\in\z^{2n+2}}|\langle F,T_{e^{bp}}M_{a(2k,l,m)}\mathcal{G}\rangle|^2\leq B\big\|F\big\|_{L^2(\R^*,\b;d\kappa)}^2\ ,
\end{align*}
for all $F\in P(\mathcal{G})$. In other words by \eqref{16} and \eqref{20}
\begin{align*}
A\|\rho\|^2\leq\frac{1}{b}\sum_{(k,l,m)\in\z^{2n+1}}\int_1^{e^{1/b}}\big|\rho_{k,l,m}(\nu)\big|^2w_{g_{k,l,m}}^2(\nu)\ \frac{d\nu}{\nu}\leq B\|\rho\|^2\ ,\numberthis\label{23}
\end{align*}
where $\rho$ is given by \eqref{17}. From the left hand side inequality of \eqref{23} we have
\begin{align*}
\sum_{(k,l,m)\in\z^{2n+1}}\int_{\Omega_{k,l,m}}\big(Ab-w_{g_{k,l,m}}(\nu)\big)\ \big|\rho_{k,l,m}(\nu)\big|^2w_{g_{k,l,m}}(\nu)\ \frac{d\nu}{\nu}\leq 0.\numberthis\label{24}
\end{align*}
Let $M_{k,l,m}^{(1)}=\{\nu\in\Omega_{k,l,m}:w_{g_{k,l,m}}(\nu)<Ab\}$. We aim to show that $M_{k,l,m}^{(1)}$ has measure zero. We consider $\rho^{(1)}=\big\{\rho_{k,l,m}^{(1)}w_{g_{k,l,m}}^{1/2}\big\}_{(k,l,m)\in\z^{2n+1}}$ such that $\rho_{k,l,m}^{(1)}=\frac{1}{\beta_{k,l,m}\|\mathcal{G}_{k,l,m}\|_{L^2(\R^*,\b;d\kappa)}}\chi_{M_{k,l,m}^{(1)}}$ with $\sum_{(k,l,m)\in\z^{2n+1}}\frac{1}{\beta_{k,l,m}^2}<\infty$. A similar calculation as in Theorem \ref{a} leads to $\rho^{(1)}\in\ell^2\big(\z^{2n+1},\\L^2((1,e^{1/b});d\kappa)\big)$. Substituting $\rho=\rho^{(1)}$ in \eqref{24} we get
\begin{align*}
\sum_{(k,l,m)\in\z^{2n+1}}\frac{1}{\beta_{k,l,m}^2\|\mathcal{G}_{k,l,m}\|_{L^2(\R^*,\b;d\kappa)}^2}\int_{M_{k,l,m}^{(1)}}\big(Ab-w_{g_{k,l,m}}(\nu)\big)w_{g_{k,l,m}}(\nu)\ \frac{d\nu}{\nu}\leq 0.
\end{align*}
But on ${M_{k,l,m}^{(1)}}$ both $Ab-w_{g_{k,l,m}}(\nu)$ and $w_{g_{k,l,m}}(\nu)$ are strictly positive. Therefore $\forall\ (k,l,m)\in\z^{2n+1}$,
\begin{align*}
\int_{M_{k,l,m}^{(1)}}\big(Ab-w_{g_{k,l,m}}(\nu)\big)w_{g_{k,l,m}}(\nu)\ \frac{d\nu}{\nu}=0.
\end{align*}
Since both $Ab-w_{g_{k,l,m}}(\nu)$ and $w_{g_{k,l,m}}(\nu)$ are strictly positive on $M_{k,l,m}^{(1)}$, it follows that $|M_{k,l,m}^{(1)}|=0$. Therefore $Ab\leq w_{g_{k,l,m}}(\nu)$ for $a.e.\ \nu\in\Omega_{k,l,m},\ \forall\ (k,l,m)\in\z^{2n+1}$. From the right hand side inequality of \eqref{23}, we have
\begin{align*}
\sum_{(k,l,m)\in\z^{2n+1}}\int_{\Omega_{k,l,m}}\big(w_{g_{k,l,m}}(\nu)-Bb\big)\ \big|\rho_{k,l,m}(\nu)\big|^2w_{g_{k,l,m}}(\nu)\ \frac{d\nu}{\nu}\leq 0.
\end{align*}
In a similar way substituting $\rho=\rho^{(2)}=\big\{\rho_{k,l,m}^{(2)}w_{g_{k,l,m}}^{1/2}\big\}$ in the above equation we get $w_{g_{k,l,m}}(\nu)\leq Bb$ for $a.e.\ \nu\in\Omega_{k,l,m},\ \forall\ (k,l,m)\in\z^{2n+1}$, where $\rho_{k,l,m}^{(2)}=\frac{1}{\beta_{k,l,m}\|\mathcal{G}_{k,l,m}\|_{L^2(\R^*,\b;d\kappa)}}\chi_{M_{k,l,m}^{(2)}}$ by taking $M_{k,l,m}^{(2)}=\{\nu\in\Omega_{k,l,m}:w_{g_{k,l,m}}(\nu)>Bb\}$.\\ 

Conversely assume that $Ab\leq w_{g_{k,l,m}}(\nu)\leq Bb$ for $a.e.\ \nu\in\Omega_{k,l,m},\ \forall\ (k,l,m)\in\z^{2n+1}$. Then for $F\in span\ \mathcal{U}(\mathcal{G})$ we have \eqref{20}. By our hypothesis
\begin{align*}
A\int_1^{e^{1/b}}\hspace{-3 mm}\big|\rho_{k,l,m}(\nu)\big|^2w_{g_{k,l,m}}(\nu)\ \frac{d\nu}{\nu}\leq\frac{1}{b}\int_1^{e^{1/b}}\hspace{-3 mm}\big|\rho_{k,l,m}(\nu)\big|^2w_{g_{k,l,m}}^2(\nu)\ \frac{d\nu}{\nu}\leq B\int_1^{e^{1/b}}\hspace{-3 mm}\big|\rho_{k,l,m}(\nu)\big|^2w_{g_{k,l,m}}(\nu)\ \frac{d\nu}{\nu}.
\end{align*}
Using \eqref{16} and \eqref{20} in the above equation, we get
\begin{align*}
A\big\|F\big\|_{L^2(\R^*,\b;d\kappa)}^2\leq\sum_{(k,l,m,p)\in\mathcal{F}}|\langle F,T_{e^{bp}}M_{a(2k,l,m)}\mathcal{G}\rangle|^2\leq B\big\|F\big\|_{L^2(\R^*,\b;d\kappa)}^2.
\end{align*}
Hence the required result follows from density argument.
\end{proof}
\begin{theorem}
Let $\{g_{k,l,m}(\cdot,\nu e^{s/b},j):k,l\in\Z,m\in\z\}$ be an orthogonal system in $L^2(\c^n)$ for $a.e.\ \nu\in(1,e^{1/b})$ and for each $j\in\{1,-1\},s\in\z$. Define 
$$w_{g_{k,l,m}}(\nu)=\sum_{j\in\{1,-1\}}\sum_{s\in\z}\|g_{k,l,m}(\cdot,\nu e^{s/b},j)\|_{L^2(\c^n)}^2,\  \nu\in(1,e^{1/b}).$$
Then the family $\mathcal{U}(\mathcal{G})$ is a Riesz sequence with bounds $A,B>0$ iff $Ab\leq w_{g_{k,l,m}}(\nu)\leq Bb$ for $a.e.\ \nu\in(1,e^{1/b})$, $\forall\ (k,l,m)\in\z^{2n+1}$.
\end{theorem}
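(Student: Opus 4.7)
The plan is to recast the Riesz sequence condition as an equivalent weighted-$L^2$ sandwich on the weights $w_{g_{k,l,m}}$, using formula \eqref{16}, and then extract pointwise bounds by isolating a single index and testing on trigonometric polynomials. For a finite linear combination $F=\sum_{(k,l,m,p)\in\mathcal{F}}\alpha_{k,l,m,p}T_{e^{bp}}M_{a(2k,l,m)}\mathcal{G}$ with associated $\rho_{k,l,m}(\nu)=\sum_{p}\alpha_{k,l,m,p}e^{-2\pi ibp\log\nu}$, the substitution $u=\log\nu$ and Parseval's identity for the orthonormal basis $\{\sqrt{b}\,e^{2\pi ibpu}:p\in\z\}$ of $L^2((0,1/b))$ give
\begin{equation*}
\sum_{p\in\z}|\alpha_{k,l,m,p}|^2 \;=\; b\int_1^{e^{1/b}}|\rho_{k,l,m}(\nu)|^2\,\frac{d\nu}{\nu}.
\end{equation*}
Combined with \eqref{16}, the Riesz condition $A\sum|\alpha_{k,l,m,p}|^2\leq\|F\|^2\leq B\sum|\alpha_{k,l,m,p}|^2$ translates to
\begin{equation*}
Ab\sum_{(k,l,m)}\int_1^{e^{1/b}}|\rho_{k,l,m}|^2\,\frac{d\nu}{\nu}\;\leq\;\sum_{(k,l,m)}\int_1^{e^{1/b}}|\rho_{k,l,m}|^2\,w_{g_{k,l,m}}\,\frac{d\nu}{\nu}\;\leq\;Bb\sum_{(k,l,m)}\int_1^{e^{1/b}}|\rho_{k,l,m}|^2\,\frac{d\nu}{\nu},
\end{equation*}
valid for every finitely supported choice of trigonometric polynomials $\rho_{k,l,m}$. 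Sufficiency is then immediate from the pointwise bound $Ab\leq w_{g_{k,l,m}}(\nu)\leq Bb$ by termwise integration.

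For necessity, I would isolate a single index $(k_0,l_0,m_0)$ by taking all other $\rho_{k,l,m}=0$, reducing the sandwich to
\begin{equation*}
Ab\int_1^{e^{1/b}}|\rho|^2\,\frac{d\nu}{\nu}\;\leq\;\int_1^{e^{1/b}}|\rho|^2\,w_{g_{k_0,l_0,m_0}}\,\frac{d\nu}{\nu}\;\leq\;Bb\int_1^{e^{1/b}}|\rho|^2\,\frac{d\nu}{\nu}
\end{equation*}
for every trigonometric polynomial $\rho$ in $\log\nu$. Since such polynomials are dense in $L^2((1,e^{1/b});d\kappa)$, the upper bound extends by Fatou's lemma to all $\psi\in L^2((1,e^{1/b});d\kappa)$; testing $\psi=\chi_E$ for arbitrary measurable $E\subset(1,e^{1/b})$ and invoking the Lebesgue differentiation theorem (valid since $d\kappa$ and Lebesgue measure are mutually absolutely continuous on this interval) then yields $w_{g_{k_0,l_0,m_0}}\leq Bb$ a.e. With this essential boundedness in hand, the quadratic form $\psi\mapsto\int|\psi|^2w_{g_{k_0,l_0,m_0}}\,d\kappa$ is $L^2$-continuous, so the lower bound also extends to $L^2$ and, by the same test-function argument, gives $w_{g_{k_0,l_0,m_0}}\geq Ab$ a.e. The passage from $span\ \mathcal{U}(\mathcal{G})$ to $P(\mathcal{G})$ is then routine by density.

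The main obstacle is precisely the extension of the sandwich from trigonometric polynomials to arbitrary $L^2$ functions, since $w_{g_{k,l,m}}$ is not a priori bounded; the remedy is to perform this in two stages (Fatou for the upper bound first, then $L^2$-continuity for the lower bound). The isolation step within the Riesz condition is legitimate since that condition must hold for every coefficient sequence, including those supported on a single triple $(k_0,l_0,m_0)$; it is precisely this isolation that forces the bounds on $w_{g_{k,l,m}}$ to hold on all of $(1,e^{1/b})$ rather than merely on $\Omega_{k,l,m}$, as in the frame-sequence case.
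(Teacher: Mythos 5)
Your proof is correct and follows essentially the same route as the paper's: both reduce, via \eqref{16} and the Parseval identity $\sum_p|\alpha_{k,l,m,p}|^2=b\int_1^{e^{1/b}}|\rho_{k,l,m}(\nu)|^2\,\frac{d\nu}{\nu}$, to the weighted sandwich \eqref{29} and then extract the pointwise bounds by testing with (normalized) characteristic functions of the exceptional sets $\{\nu:w_{g_{k,l,m}}(\nu)<Ab\}$ and $\{\nu:w_{g_{k,l,m}}(\nu)>Bb\}$. The only difference is bookkeeping: the paper states the Riesz inequality for arbitrary $\ell^2$ coefficient sequences and substitutes indicator-type $\rho$'s for all indices simultaneously (with a summable normalization $\beta_{k,l,m}$), whereas you isolate a single index and pass from trigonometric polynomials to general $L^2$ test functions by a two-stage Fatou/continuity argument --- both are legitimate.
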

\begin{proof}
Let $R:\ell^2(\z^{2n+2})\longrightarrow L^2(\R^*,\b;d\kappa)$ defined by
\begin{align*}
R\big(\{\alpha_{k,l,m,p}\}\big)=\sum_{(k,l,m,p)\in\mathcal{F}}\alpha_{k,l,m,p}T_{e^{bp}}M_{a(2k,l,m)}\mathcal{G},\hspace{4 mm}\forall\ \{\alpha_{k,l,m,p}\}\in\ell^2(\z^{2n+2}),
\end{align*}
be the synthesis operator corresponding to the system $\mathcal{U}(\mathcal{G})$. Then $\mathcal{U}(\mathcal{G})$ is a Riesz sequence with bounds $A,B>0$ iff
\begin{align*}
A\big\|\{\alpha_{k,l,m,p}\}\big\|_{\ell^2(\z^{2n+2})}^2\leq\big\|R\big(\{\alpha_{k,l,m,p}\}\big)\big\|_{L^2(\R^*,\b;d\kappa)}^2\leq B\big\|\{\alpha_{k,l,m,p}\}\big\|_{\ell^2(\z^{2n+2})}^2\numberthis\label{26}\ ,
\end{align*}
$\forall\ \{\alpha_{k,l,m,p}\}\in\ell^2(\z^{2n+2})$. By \eqref{16} we get
\begin{align*}
\big\|R\big(\{\alpha_{k,l,m,p}\}\big)\big\|_{L^2(\R^*,\b;d\kappa)}^2&=\sum_{(k,l,m)\in\z^{2n+1}}\int_1^{e^{1/b}}|\rho_{k,l,m}(\nu)|^2\ w_{g_{k,l,m}}(\nu)\ \frac{d\nu}{\nu}\numberthis\label{27}\ ,
\end{align*}
where $\rho_{k,l,m}(\nu)=\sum_{p\in\z}\alpha_{k,l,m,p}e^{-2\pi ibp\log\nu}$. Now
\begin{align*}
\widehat{\rho}_{k,l,m}(e^{-p})&=b\int_1^{e^{1/b}}\rho_{k,l,m}(\nu) e^{2\pi ibp\log\nu}\ \frac{d\nu}{\nu}\\
&=b\int_1^{e^{1/b}}\bigg(\sum_{p^\prime\in\z}\alpha_{k,l,m,p^\prime}e^{-2\pi ibp^\prime\log\nu}\bigg)e^{2\pi ibp\log\nu}\ \frac{d\nu}{\nu}\\
&=b\sum_{p^\prime\in\z}\alpha_{k,l,m,p^\prime}\int_1^{e^{1/b}}e^{2\pi ib(p-p^\prime)\log\nu}\ \frac{d\nu}{\nu}\\
&=b\sum_{p^\prime\in\z}\alpha_{k,l,m,p^\prime}\cdot\frac{1}{b}\ \delta_{p,p^\prime}\\
&=\alpha_{k,l,m,p}.
\end{align*}
Thus by using Plancherel formula for $L^2((1,e^{1/b});d\kappa)$, we have
\begin{align*}
\sum_{p\in\z}\big|\alpha_{k,l,m,p}\big|^2=b\int_1^{e^{1/b}}\big|\rho_{k,l,m}(\nu)\big|^2\ \frac{d\nu}{\nu}.\numberthis\label{28}
\end{align*} 
Using \eqref{27} and \eqref{28} in \eqref{26}, we get
\begin{align*}
Ab\int_1^{e^{1/b}}\big|\rho_{k,l,m}(\nu)\big|^2\ \frac{d\nu}{\nu}\leq\sum_{(k,l,m)\in\z^{2n+1}}\int_1^{e^{1/b}}|\rho_{k,l,m}(\nu)|^2\ w_{g_{k,l,m}}(\nu)\ \frac{d\nu}{\nu}\leq Bb\int_1^{e^{1/b}}\big|\rho_{k,l,m}(\nu)\big|^2\ \frac{d\nu}{\nu}.\numberthis\label{29}
\end{align*}
From the left hand side inequality of \eqref{29} we have
\begin{align*}
\sum_{(k,l,m)\in\z^{2n+1}}\int_1^{1/b}(Ab-w_{g_{k,l,m}}(\nu))|\rho_{k,l,m}(\nu)|^2\ \frac{d\nu}{\nu}\leq 0.\numberthis\label{30}
\end{align*}
Let $Q_{k,l,m}^{(1)}=\{\nu\in(1,e^{1/b}):w_{g_{k,l,m}}(\nu)<Ab\}$. Define $\rho^{(1)}=\{\rho_{k,l,m}^{(1)}w_{g_{k,l,m}}^{1/2}\}_{(k,l,m)\in\z^{2n+1}}$, where $\rho_{k,l,m}^{(1)}=\frac{1}{\beta_{k,l,m}\|\mathcal{G}_{k,l,m}\|_{L^2(\R^*,\b;d\kappa)}}\chi_{Q_{k,l,m}^{(1)}}$. Then a similar calculation as in Theorem \ref{a} leads to $\rho^{(1)}\in\ell^2\big(\z^{2n+1},\\L^2((1,e^{1/b});d\kappa)\big)$. Substituting $\rho=\rho^{(1)}$ in \eqref{30}, we get
\begin{align*}
\sum_{(k,l,m)\in\z^{2n+1}}\frac{1}{\beta_{k,l,m}^2\|\mathcal{G}_{k,l,m}\|_{L^2(\R^*,\b;d\kappa)}^2}\int_{Q_{k,l,m}^{(1)}}(Ab-w_{g_{k,l,m}}(\nu))\ \frac{d\nu}{\nu}\leq 0\ ,
\end{align*}
which in turn implies that
\begin{align*}
\int_{Q_{k,l,m}^{(1)}}(Ab-w_{g_{k,l,m}}(\nu))\ \frac{d\nu}{\nu}=0,\hspace{5 mm} \forall\ (k,l,m)\in\z^{2n+1}.
\end{align*}
Since $Ab-w_{g_{k,l,m}}(\nu)>0$ on $Q_{k,l,m}^{(1)}$, we get $|Q_{k,l,m}^{(1)}|=0$. From the right hand side inequality of \eqref{29} we have
\begin{align*}
\sum_{(k,l,m)\in\z^{2n+1}}\int_1^{1/b}(w_{g_{k,l,m}}(\nu)-Bb)|\rho_{k,l,m}(\nu)|^2\ \frac{d\nu}{\nu}\leq 0.\numberthis\label{31}
\end{align*}
Let $Q_{k,l,m}^{(2)}=\{\nu\in(1,e^{1/b}):w_{g_{k,l,m}}(\nu)>Bb\}$. As before substituting $\rho=\rho^{(2)}=\\ \{\rho_{k,l,m}^{(2)}w_{g_{k,l,m}}^{1/2}\}_{(k,l,m)\in\z^{2n+1}}$, where $\rho_{k,l,m}^{(2)}=\frac{1}{\beta_{k,l,m}\|\mathcal{G}_{k,l,m}\|_{L^2(\R^*,\b;d\kappa)}}\chi_{Q_{k,l,m}^{(2)}}$, in \eqref{31} we get $|Q_{k,l,m}^{(2)}|=0$. Therefore $Ab\leq w_{g_{k,l,m}}(\nu)\leq Bb$ for $a.e.\ \nu\in(1,e^{1/b})$ and for each $(k,l,m)\in\z^{2n+1}$.\\

Conversely assume that $Ab\leq w_{g_{k,l,m}}(\nu)\leq Bb$ for $a.e.\ \nu\in(1,e^{1/b})$ and for each $(k,l,m)\in\z^{2n+1}$. Then \eqref{29} follows immediately.   
\end{proof}
\section*{Acknowledgement}                                                                                                                                                                                                                                                                                                                                                                                                                                                                                                                                                                                                                                                                                                                                                                                                                                                                                                                                                                                                                                                                                                                                                                                                                                                                                                                                                                                                                                                                                                                                                                                                                                                                                                                                                                                                                                                                                                                                                                                                                                                                                                                                                                                                                                                                                                                                                                                                                                                                                                                                                                                                                                                                                                                                                                                                                                                                                                                                                                                                                                                                                                                                                                                                                                                                                                                                                                                                                                                                                                                                                                                                                                                                                                                                                                                                                                                                                                                                                                                                                                                                                                                                                                                                                                     
One of the authors (R.R) was partially supported by Visiting professor program funded by the Bavarian State Ministry for Science, Research and the Arts, TUM international centre for the stay at Technical University Munich, Germany for the period October to December 2019. She sincerely thanks Professor Massimo Fornasier and Professor Peter Massopust , TUM, for their kind invitation and excellent hospitality for the entire period of visit during October 2019 to September  2020.\\      

The authors profusely thank Prof. K. Parthasarathy, RIASM for his helpful suggestions regarding the explicit calculations for the Fourier transform on $\R^*$.

\bibliographystyle{amsplain}
\bibliography{gref}
\end{document}